\newtheorem{thm}{Theorem}[section]
\newtheorem{prop}[thm]{Proposition}
\newtheorem{cor}[thm]{Corollary}
\theoremstyle{definition}
\newtheorem{rem}[thm]{Remark}
\newtheorem{dfn}[thm]{Definition}
\newtheorem{exmpl}[thm]{Example}
\newcommand{\e}{\varepsilon}
\renewcommand{\a}{\alpha}
\newcommand{\s}{\sigma}
\renewcommand{\k}{\kappa}
\newcommand{\fg}{\mathfrak g}
\newcommand{\fh}{\mathfrak h}
\newcommand{\fk}{\mathfrak k}
\newcommand{\fsl}{\mathfrak{sl}}
\newcommand{\fso}{\mathfrak{so}}
\newcommand{\fe}{\mathfrak e}
\newcommand{\Diff}{{\rm{\bf Diff}}}
\newcommand{\Eg}{{\rm {\bf E}}}
\newcommand{\Hg}{{\rm {\bf H}}}
\newcommand{\SOg}{{\rm{\bf SO}}}
\newcommand{\GLg}{{\rm{\bf GL}}}
\newcommand{\Derg}{{\rm{\bf Der}}}
\newcommand{\Aut}{{\rm {\bf Aut}}}
\newcommand{\End}{{\rm {\bf End}}}
\newcommand{\Ug}{{\rm {\bf U}}}
\newcommand{\SUg}{{\rm {\bf SU}}}
\newcommand{\SLg}{{\rm {\bf SL}}}
\newcommand{\Nil}{{\rm {\bf Nil}}}
\newcommand{\NN}{\mathbb N}
\newcommand{\ZZ}{\mathbb Z}
\newcommand{\RR}{\mathbb R}
\newcommand{\CC}{\mathbb C}
\newcommand{\ad}{{\rm ad}}
\newcommand{\tr}{{\rm tr\,}}
\newcommand{\id}{{\rm id}}
\renewcommand{\div}{{\rm div}}
\newcommand{\Fix}{{\rm Fix}}
\newcommand{\de}{{\rm d\hspace{0.2mm}}}
\newcommand{\lr}{\longrightarrow}
\newcommand{\De}{{\rm D\hspace{0.2mm}}}
\newcommand{\pr}{{\rm pr}}
\title{A classification of Thurston geometries without compact quotients}
\author{Panagiotis Konstantis\footnote{Phillips Universit\"at Marburg}\quad\quad\quad Frank Loose\footnote{Eberhard Karls Universit\"at T\"ubingen}}
\date{}
\begin{document}

\maketitle

\begin{abstract}
We classify pairs $(M,G)$ where $M$ is a $3$--dimensional simply connected smooth manifold and $G$ a Lie group acting on $M$ transitively, effectively with compact isotropy group. 
\end{abstract}

\section{Introduction}

{\bf Background.}\;\;\;Over the last few decades the study of closed $3$-manifolds was a very active topic in geometry and topology, which culminated in the remarkable proof of \emph{Thurston's geometrization conjecture} by G. Perelman. It was known since  \cite{Kneser1929} and \cite{Milnor1962} that every closed $3$-manifold $M$ can be decomposed uniquely into prime $3$-manifolds. On the other hand W. Thurston conjectured in \cite{Thurston1982} that if a prime manifold is cut along some tori, which is also known as the Jaco-Shalen-Johannson Torus Decomposition, the interior admits a unique geometric structure of finite volume.  This means that the universal cover of the interior is a \emph{model geometry} also known as \emph{Thurston geometry}. These are defined as pairs $(M,G)$ where $M$ is a simply connected, smooth, $3$-dimensional manifold and $G$ a Lie group of diffeomorphisms of $M$, such that $G$ acts transitively and with compact stabilizers. Moreover, $(M,G)$ should admit compact quotients (i.e. 
there 
is a discrete subgroup $\Gamma \subset G$ such that $M/\Gamma$ is a smooth compact manifold) and $G$ should be \emph{maximal} in the sense that $G$ is not contained in a bigger Lie group of diffeomorphisms with same properties.


The list of Thurston geometries may be found, e.g. in \cite{Thurston1997} or in \cite{Scott1983}, however it is difficult to find a stringent proof of this classification among the literature. One aim of this article is to present a complete proof of this classification following the arguments in \cite{Thurston1997}.

Another aim is to study model geometries without assuming the existence of compact quotients nor requiring the maximality of $G$. We call such a pair $(M,G)$ simply a \emph{geometry}. These objects may be interesting for applications in physics, since they may be used to model the spatial parts of homogeneous space-times. For example in \cite{Konstantis2013} they were used to study non-isotropic, homogeneous cosmological models with positive cosmological constant. Finally it should be mentioned that in \cite[p.474]{Scott1983} there is a remark wherein R. S. Kulkarni carried out such classification but remained unpublished. 

\bigskip
\noindent
{\bf Overview of the classification.}\;\;\;As we already pointed out the classification will follow the outline of the proof in \cite{Thurston1997} but without using the assumption that $(M,G)$ has compact quotients. For the sake of convenience we assume that $G$ is connected. An easy argument shows that the stabilizer $K$ of $(M,G)$ in a point has to be connected and since $K$ is compact, it has to be a subgroup of $\SOg(3)$. It follows that the possible dimensions of $K$ are $0$, $1$ and $3$ since the Lie algebra of $\SOg(3)$ clearly has no two-dimensional Lie subalgebra. So we divide the geometries according to the dimension of their isotropy groups:

\bigskip

{$\bullet$ ${\dim K=3}$}\;: We call this type an \emph{isotropic geometry}. Clearly we have $K=\SOg(3)$ and $M$ admits a metric of constant sectional curvature. Hence $M$ is a space form and $G$ is the identity component of the full isometry group of the standard metric of this space form, compare Theorem 3.8.4 (a) in \cite{Thurston1997}.

\bigskip

{$\bullet$ ${\dim K=1}$}\;: We call this type an \emph{axially symmetric geometry}. The isotropy group is isomorphic to $\SOg(2)$ and it acts on the tangent space of a fixed point as a rotation around a unique line. This implies the existence of a $G$--invariant vector field $X$ which in turn gives a foliation into geodesics of $M$ through its flow lines. Here we give a precise argument why the space of leafs $N$ inherits a differentiable structure from $M$ (compare \cite[p. 183]{Thurston1997}). The crucial fact is that the flow acts properly on $M$ which is proven in Corollary \ref{CProperAction}. 

Moreover the quotient map $\pi \colon M \to N$ is a fiber bundle with $\RR$ or $\SOg(2)$ as fiber group. We explain in section \ref{STheGInvariantConnectionOnMtoN} that the plane field orthogonal to $X$ with respect to any $G$-invariant metric is unique and defines a connection for this bundle, compare \cite[p. 183]{Thurston1997} and Corollary \ref{CGinvariantConnection}. Since this connection is $G$-invariant its curvature is either globally zero or not. Furthermore, from now on the classification in \cite{Thurston1997} makes heavily use of the existence of compact quotients. A key point is here that if $(M,G)$ admits compact quotients, then $\div X$, which is the divergence of $X$ with respect to a $G$--invariant metric, is zero, cf. \cite[p. 182]{Thurston1997}. But then it follows from Proposition \ref{PDivergenceOfX} that $X$ has to be a Killing field for a $G$-invariant metric which means that $N$ admits a metric (of constant curvature) such that $\pi \colon M \to N$ is a Riemannian submersion. 

In section \ref{SFlatConnection} we start to classify the geometries which allow such a flat connection. In \cite{Thurston1997} there is no argument to prove this step. In our approach the key fact for this case is Proposition \ref{PExtensionInFlatCase} where $G$ fits into an exact sequence 
\[
1 \longrightarrow G' \longrightarrow G \longrightarrow \RR \to 1
\]
where $G'\in\{\Eg_0(2),\SOg(3),\SOg^+(2,1)\}$. Now if $(M,G)$ admits compact quotients this sequence splits through the flow of $X$, therefore $G=G' \times \RR$ and we obtain the geometries of $(b_1)$ in \cite[p. 183]{Thurston1997} (compare Remark \ref{RCompareToThurston}). Otherwise we obtain an additional geometry, cf. Example \ref{EFlatGeometry}. A complete list and proof may be found in Theorem \ref{TClassificationFlatGeometries}.
  
The classification of axially symmetric geometries with non-flat $G$-invariant connection is stated within section \ref{SnonFlatConnection}. We prove with Corollary \ref{CXKillingField}, even though $(M,G)$ does not admit compact quotients, that $X$ is a Killing field. This means basically that we do not obtain a new geometry here. Hence this case does not depend on the existence of compact quotients. Nonetheless there is no precise argument for this step in \cite{Thurston1997} or in \cite{Scott1983}. Therefore we use our Lie group theoretical approach here. Basically we try to obtain the Lie algebra of $G$ by means of extensions of Lie groups and Lie algebras. Afterwards we deduce the Lie group $G$ as well as the action of $G$ on $M$. This is all done in the Propositions \ref{PGeometryS3}, \ref{PGeometrySL} and \ref{PGeometryNil}. The final list is stated in Theorem \ref{TNonFlatGeometries}.

\bigskip

{$\bullet$ ${\dim K=0}$}\;: Here $M$ is the Lie group $G$. Since we assumed $M$ to be simply connected, the Lie group $G$ is uniquely determined by its Lie algebra $\fg$. This was done by many authors, compare \cite{Bianchi1898}, \cite{Bianchi2001}, \cite{Glas2013}, \cite{Konstantis2013}. The $3$-dimensional Lie groups which are Thurston geometries are the \emph{unimodular} Lie groups, compare \cite[p.99]{Milnor1976}. Also in \cite{Milnor1976} one finds a complete classification of unimodular Lie groups. For a more detailed discussion see section \ref{SLieGroups}.

\bigskip
\noindent
{\bf Notations and basic definitions.}\;\;\; A \emph{geometry} is a pair $(M,G)$ where $M$ is a simply connected, smooth $3$-manifold and $G$ is a Lie group of diffeomorphisms acting on $M$ transitively with compact stabilizers. We assumed $G$ to be connected, but with a little more effort the classification can be done without this assumption. Using the long exact homotopy sequence, this implies that the stabilizer $K$ has to be connected, since $M$ is simply connected.

Actions of $\theta \colon G \times X \to X$ on smooth manifolds $X$ will be abbreviated by $\theta(g,x)=:g.x$. Note that $G$ is a subgroup of $\Diff(M)$, hence every element $g \in G$ is viewed as a diffeomorphism on $M$.

For this article we fix a point $m_0 \in M$ and let $K$ be its isotropy group in $G$. We denote with $\rho \colon  K \to \SOg(3)$ its faithful isotropy representation. Hence the dimension of $K$ is either $0$, $1$ or $3$, since the Lie algebra of $\SOg(3)$ has no $2$-dimensional Lie subalgebra.  Therefore the isomorphism class of $K$ is the trivial group, $\SOg(2)$ or $\SOg(3)$ respectively. 

We say two geometries $(M,G)$ and $(M',G')$ are \emph{isomorphic} or \emph{equivariant diffeomorphic} if there is a Lie group isomorphism $\Phi \colon G \to G'$ and a diffeomorphism $\varphi \colon M \to M'$ such that $\varphi$ is $\Phi$--equivariant. 

Some groups will be of great importance in this article. We denote be $\Eg_0(n)$ the connected subgroup of the full group of motions in euclidean space, i.e. $\Eg_0(n)=\RR^n \rtimes \SOg(n)$, where $\SOg(n)$ acts on $\RR^n$ by its standard representation. Furthermore let $\SOg^+(n,1)$ be the connected component of the isometry group of hyperbolic space $D^n$.

At the beginning it was mentioned that the classification of isotropic geometries is very easy. The resulting geometries are given by the pairs
\[
 (\RR^3, \Eg_0(3)), \quad (S^3, \SOg(4)), \quad (D^3,\SOg^+(3,1))
\]
with the standard actions as isometry groups. 

\bigskip
{\bf Acknowledgments.}\;\;\; The first author would like to thank the \emph{Carl Zeiss Stiftung} for financial support and \emph{Ilka Agricola} for helpful comments.

\section{Axially Symmetric Geometries}

The classification of axially symmetric geometries is much harder than the classification of isotropic geometries. Here the crucial fact is that every non-trivial $\SOg(2)$-representation on a $3$-dimensional vector space has a unique decomposition into two irreducible subspaces. Even more, $\SOg(2)$ is acting trivially on the $1$-dimensional subspace. Since the isotropy groups of an axially symmetric geometry are isomorphic to $\SOg(2)$, this induces a $G$--invariant vector field $X$ on $M$ whose flow determines a foliation of $M$ into geodesics. 

In particular we will see that the space of leafs is a smooth manifold such that $M$ is the total space of a principal bundle over a surface. Now the $2$-dimensional irreducible subspace creates a distribution of planes on $M$ which turns out to be a $G$--invariant connection on $M$. The classification will be then divided into two cases, namely if the connection is flat or not.

\subsection{Foliation of $M$ into geodesics}

We start to study the isotropy representation and the properties of a $G$--invariant vector field which comes from that representation. The flow of the vector field will give us a foliation of $M$.

Since $\dim K=1$ we obtain $K \cong \SOg(2)$ and since the isotropy representation $\rho$ is faithful we formulate without proof the crucial 

\begin{prop}\label{PRepresentationSO(2)} 
Let $\rho \colon \SOg(2) \to V$ be a faithful representation on a $3$--dimensional vector space $V$. Then there is a unique $\SOg(2)$--invariant decomposition $V=L\oplus W$ into irreducible subspaces where $\dim L=1$ and $\dim W=2$ such that $\rho$ fixes every point in $L$ and $W$ is the orthogonal complement to $L$ for every $\SOg(2)$--invariant metric.
\end{prop}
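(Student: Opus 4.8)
This is a standard fact about representations of the circle group, and the plan is to derive it from complete reducibility together with the classification of irreducible real $\SOg(2)$-representations. First I would record that, $\SOg(2)$ being compact, $\rho$ is completely reducible: averaging an arbitrary inner product over $\SOg(2)$ gives an invariant one, the orthogonal complement (with respect to any invariant inner product) of an invariant subspace is again invariant, so $V$ decomposes into irreducible subrepresentations. Next I would invoke the list of irreducible real representations of $\SOg(2)$: the trivial one-dimensional representation $\mathbb 1$, and for each $k\ge 1$ the two-dimensional representation $R_k$ on which the rotation by the angle $\theta$ acts as the rotation by $k\theta$; there is no irreducible real representation of dimension $\ge 3$ (after complexifying, the irreducibles of the abelian group $\SOg(2)$ become one-dimensional characters, and a real irreducible is therefore a real form of the trivial character or the realification of a non-trivial one).

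Since $\dim V = 3$, the only options are $V\cong\mathbb 1^{\oplus 3}$ or $V\cong\mathbb 1\oplus R_k$ for some $k\ge 1$; the first has trivial image and is excluded by faithfulness, and in the second faithfulness forces $k=1$ because $R_k$ is injective on $\SOg(2)=\RR/2\pi\ZZ$ exactly when $k=\pm 1$. Thus $V$ has exactly one trivial summand and one summand carrying the standard representation of $\SOg(2)$.

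Then I would set $L:=\{v\in V : \rho(g)v = v \text{ for all } g\in\SOg(2)\}$, the intrinsically defined subspace of fixed vectors, and let $W$ be the $R_1$-isotypic component, i.e. the sum of all invariant subspaces on which $\SOg(2)$ acts by its standard representation. The previous step gives $\dim L = 1$, $\dim W = 2$, and shows that $V = L\oplus W$ is exactly the isotypic decomposition of $V$; in particular $\rho$ fixes $L$ pointwise and acts irreducibly on $W$. Uniqueness of the decomposition is then immediate, since $L$ is intrinsic and isotypic components are canonical; if a hands-on argument is preferred, any two-dimensional invariant complement $W'$ of $L$ meets $L$ trivially, hence contains no nonzero fixed vector, hence is irreducible and isomorphic to $R_1$, hence lies in $W$, hence equals $W$. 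Finally, for any $\SOg(2)$-invariant metric $\la\cdot,\cdot\ra$ on $V$ the subspace $L^\perp$ is invariant (by invariance of the metric), two-dimensional, and meets $L$ trivially by positive-definiteness, so it is a two-dimensional invariant complement of $L$ and hence equals $W$; thus $W$ is the orthogonal complement of $L$ for every invariant metric.

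I do not anticipate a genuine obstacle here; the only point that deserves a little care is the uniqueness of $W$, since invariant complements are not unique in general — one must use that $L$ and $W$ are distinct isotypic components, equivalently that $\mathbb 1$ and the standard representation of $\SOg(2)$ are non-isomorphic and each appears with multiplicity one.
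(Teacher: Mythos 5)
Your argument is correct and complete: complete reducibility via an averaged invariant inner product, the classification of irreducible real $\SOg(2)$--representations, faithfulness to rule out $\mathbb 1^{\oplus 3}$, and the isotypic-component argument for uniqueness and for the orthogonality statement are exactly what is needed, and you rightly flag that uniqueness of $W$ rests on $L$ and $W$ being distinct isotypic components rather than on $W$ being merely an invariant complement. There is nothing to compare against in the paper itself, which explicitly states this proposition \emph{without proof}; your write-up supplies the standard argument that the authors chose to omit.
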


It will be of great importance to decompose the $G$--invariant bilinear forms on $M$ with respect to the representation $\rho$. Therefore it is sufficient to know how the $K$--invariant endomorphisms on $(V,\rho)$ decompose. We denote by $\End_\rho(V)$ the algebra of equivariant endomorphisms on $(V,\rho)$.

\begin{prop}\label{PKInvariantEndos}
Suppose $\rho \colon \SOg(2) \to \GLg(V)$ is a faithful representation on a $3$--dimensional vector space $V$. If $f \in \End_\rho(V)$ then it is given by
\[
  \left(
  \begin{matrix}
  \a & 0 \\
  0 & \lambda \cdot k|W
  \end{matrix}
  \right)
\]
with respect to the decomposition $V=L\oplus W$ of Proposition \ref{PRepresentationSO(2)} where $\a \in \End(\RR)\cong \RR$, $\lambda \geq 0$ and $k \in \SOg(2) \subseteq \GLg(V)$.
\end{prop}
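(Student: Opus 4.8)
The plan is to show first that every $f\in\End_\rho(V)$ is block diagonal with respect to the splitting $V=L\oplus W$ of Proposition \ref{PRepresentationSO(2)}, and then to bring the two blocks into the desired shape. For the first part one uses that both summands are intrinsically distinguished: $L$ is exactly the fixed subspace $V^{\SOg(2)}=\{v\in V:\rho(g)v=v\text{ for all }g\}$, since the $2$-dimensional $W$ carries the non-trivial irreducible representation and hence has no non-zero fixed vector; and $W=\ker P$, where $P=\int_{\SOg(2)}\rho(g)\,\de g$ is the averaging projection onto $L$ (well defined as $\SOg(2)$ is compact). As $f$ commutes with every $\rho(g)$, it maps $V^{\SOg(2)}$ into itself and commutes with $P$; hence $f(L)\subseteq L$ and $f(W)\subseteq W$. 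Setting $\a:=f|L\in\End(\RR)\cong\RR$, it remains to analyse $f|W\in\End_\rho(W)$.

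For the second part, fix an $\SOg(2)$-invariant inner product on $W$ (its precise choice is irrelevant, since rescaling it does not change the set of rotations of $W$). Because $\rho$ is faithful and acts trivially on $L$, the restriction $\rho|W$ is a faithful homomorphism $\SOg(2)\to\Og(W)$; its image is connected, hence contained in $\SOg(W)\cong\SOg(2)$, and a closed connected subgroup of $\SOg(W)$ is either trivial or all of it, so by injectivity $\rho(\SOg(2))|W=\SOg(W)$. Thus $\End_\rho(W)$ is the commutant of the full rotation group of the plane $W$. By Schur's lemma this commutant is a real division algebra; it contains $\id_W$ and the quarter-turn $J$, which together span a subalgebra isomorphic to $\CC$ (as $J^2=-\id_W$), and being a division algebra it cannot equal all of $\End(W)$ (which has zero divisors), so it is precisely $\RR\,\id_W\oplus\RR\,J\cong\CC$. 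Writing an element of this algebra in polar form then shows that every non-zero $f|W$ equals $\lambda\cdot k|W$ for a unique $\lambda>0$ and some $k\in\SOg(W)=\rho(\SOg(2))|W\subseteq\GLg(V)$, while $f|W=0$ corresponds to $\lambda=0$. Combined with the first part this yields the asserted form $\diag(\a,\ \lambda\cdot k|W)$.

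The argument is standard compact-group representation theory, so there is no serious obstacle; the points deserving care are the intrinsic characterisations of $L$ and $W$ (needed so that an equivariant endomorphism respects the splitting) and the identification of $\End_\rho(W)$ with $\CC$ — in particular the surjectivity of $\rho|W$ onto $\SOg(W)$, which is exactly what lets us take the rotational factor $k$ to be a genuine element of $\SOg(2)\subseteq\GLg(V)$ rather than merely an abstract isometry of $W$.
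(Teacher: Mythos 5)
Your proof is correct, but both halves take a different route from the paper's. For the block-diagonality of $f$, the paper fixes a $K$-invariant inner product $\sigma$ and uses that $\End_\rho(V)$ is closed under taking adjoints: $f$ preserves $L$ because $L$ is the unique fixed subspace, and then $\sigma(f(w),l)=\sigma(w,f^*(l))=0$ shows $f(W)\subseteq W$ since $W=L^\perp$. You instead characterise $W$ as the kernel of the averaging projection $P=\int\rho(g)\,\de g$ and use that $f$ commutes with $P$; both arguments are equally short and correct. For the restriction to $W$, the paper argues via the orbit structure rather than via Schur's lemma: fixing $w_0\neq 0$, every $w\in W$ is of the form $\lambda(k.w_0)$ with $\lambda\geq 0$ and $k\in K$, so writing $f(w_0)=\lambda\,k.w_0$ and using equivariance together with the commutativity of $\SOg(2)$ gives $f|W=\lambda\cdot k|W$ at once. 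Your route through the commutant is in one respect more careful, since you explicitly prove that $\rho(\SOg(2))|W$ is all of $\SOg(W)$ — a fact the paper uses only tacitly when it asserts that every $w$ lies in the scaled $K$-orbit of $w_0$. The one step you should tighten is the identification of the commutant with $\RR\,\id_W\oplus\RR\,J$: knowing it is a division algebra that contains this copy of $\CC$ and is not all of $\End(W)$ still leaves, a priori, a $3$-dimensional possibility, which you must exclude (no $3$-dimensional real division algebra exists, e.g.\ because left multiplication by any element of an odd-dimensional real algebra has a real eigenvalue, producing a zero divisor); alternatively, a two-line direct computation of the matrices commuting with all plane rotations avoids the detour entirely.
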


\begin{proof}
We abbreviate $K=\SOg(2)$. Choose a $K$--invariant scalar product $\sigma$ on $V$. For $f \in \End(V)$ we denote by $f^*$ the adjoint endomorphism of $f$ with respect to $\sigma$. Note thereby that $k^*=k^{-1}$ for all $k \in K$. It is easy to see that $\End_\rho(V)$ is invariant under the map $f \mapsto f^*$. Now let $f$ be a $K$--equivariant endomorphism. If $l\in L$ then by the uniqueness of $L$ as the fixed point space of $K$ we obtain $f(l)\in L$. For $w \in W$ we compute
\[
 \sigma(f(w),l)=\sigma(w,f^*(l))=0
\]
for all $l \in L$ since $W$ is the orthogonal complement to $L$, see Proposition \ref{PRepresentationSO(2)}. Hence $f|W \colon W \to W$ is a $K$--equivariant map. Fix a vector $w_0 \in W$ then for every $w \in W$ there are $\lambda \geq 0$ and $k \in K$ such that $w=\lambda( k.w_0)$. This implies $f|W=\lambda \cdot k|W$
\end{proof}

We apply Proposition \ref{PRepresentationSO(2)} to the case where $\rho$ is the isotropy representation of $(M,G)$ in some point of $M$. This leads to a $1$--dimensional Distribution $\mathcal D$ on $M$ such that $\mathcal D_m$ is the $1$--dimensional fixed point subspace of $TM_m$ as in Proposition \ref{PRepresentationSO(2)}. Moreover this line bundle is trivial:

\begin{cor}
There is a $G$--invariant vector field $X$ on $M$ which has no zeros and for every point $m \in M$ the vector $X_m$ spans  the space $\mathcal D_m$. 
\end{cor}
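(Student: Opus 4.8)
The plan is to construct $X$ directly by transporting a single nonzero vector from the reference fibre around $M$ by means of the $G$--action. Concretely, I would fix once and for all a nonzero vector $v_0\in L$, where $L\subset TM_{m_0}$ is the $1$--dimensional summand produced by Proposition \ref{PRepresentationSO(2)} applied to the isotropy representation $\rho$, and consider the map $G\to TM$, $g\mapsto (\de g)_{m_0}(v_0)$, where $g$ is regarded as a diffeomorphism of $M$ and $(\de g)_{m_0}\colon TM_{m_0}\to TM_{g.m_0}$ is its differential at $m_0$. Since the $G$--action $G\times M\to M$ is smooth, this map is smooth.

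The key step is to show that this map is constant along the fibres $gK$ of the quotient map $\pi\colon G\to G/K=M$, so that it descends to a well-defined smooth section $X$ of $TM\to M$. For $k\in K$ the diffeomorphism $gk$ equals $g\circ k$, so the chain rule together with $k.m_0=m_0$ gives $(\de(gk))_{m_0}=(\de g)_{m_0}\circ\rho(k)$; evaluating at $v_0$ and using that $\rho(k)$ fixes $L$ \emph{pointwise} — this is exactly the extra information in Proposition \ref{PRepresentationSO(2)}, and it is the crux of the whole argument — yields $(\de(gk))_{m_0}(v_0)=(\de g)_{m_0}(v_0)$. Hence $X_{g.m_0}:=(\de g)_{m_0}(v_0)$ is well defined, and smoothness of $X$ follows because $X\circ\pi$ is smooth and $\pi$ is a surjective submersion.

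It then remains to verify the three asserted properties, all of which are immediate. First, $X$ is $G$--invariant: writing $m=h.m_0$ one has $g.m=(gh).m_0$, and the chain rule gives $X_{g.m}=(\de(gh))_{m_0}(v_0)=(\de g)_m\big((\de h)_{m_0}(v_0)\big)=(\de g)_m(X_m)$. Second, $X$ has no zeros, since $(\de g)_{m_0}$ is a linear isomorphism and $v_0\neq 0$. Third, $X_m$ spans $\mathcal D_m$: by construction $\mathcal D_m=(\de g)_{m_0}(L)$ for any $g$ with $g.m_0=m$ (this is well defined by the uniqueness of $L$ in Proposition \ref{PRepresentationSO(2)}), and $X_m=(\de g)_{m_0}(v_0)$ is a nonzero vector in the line $(\de g)_{m_0}(L)$.

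I do not expect a genuine obstacle here; the single point one must not overlook is that the construction relies on $K$ fixing $L$ \emph{pointwise} and not merely preserving it as a subspace — otherwise the descent to $M$ would only recover the line field $\mathcal D$, not a global nowhere-vanishing vector field. This pointwise fixing holds because a faithful homomorphism $\SOg(2)\to\SOg(3)$ has image a circle of rotations about a common axis, which is precisely the content of Proposition \ref{PRepresentationSO(2)}.
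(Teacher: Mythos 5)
Your proof is correct and follows essentially the same route as the paper: fix a nonzero vector in the fixed line $L\subset TM_{m_0}$, push it forward by the $G$--action, and use that $K$ fixes $L$ pointwise (Proposition \ref{PRepresentationSO(2)}) to see the result is well defined. You supply slightly more detail than the paper on smoothness and on why $X_m$ spans $\mathcal D_m$, but the underlying argument is identical.
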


\begin{proof}
Fix a point $m_0 \in M$ and a $0\neq \xi \in \mathcal D_{m_0}$. For $m \in M$ there is a $g \in G$ such that $m=g.m_0$. Define now $X_m:=g.\xi$ where here we use the induced action of $G$ on $TM$. This is well--defined since any other $g' \in G$ such that $m=g'.m_0$ differs from $g$ by an element of the isotropy $K$ in $m_0$. But $\xi$ is fixed by elements of $K$, hence $X_m$ is well--defined. And since the isotropy groups of the points $m$ and $m_0$ are conjugated by $g$ we obtain that $X_m$ lies in $\mathcal D_m$.
\end{proof}

\begin{rem}~
\begin{enumerate}
 \item[(a)] The vector field $X$ is complete because of his $G$--invariance and moreover its flow commutes with all elements of $G$, compare Remark \ref{RCommutingActions}. So let $\Phi \colon \RR \times M \to M$ be its flow. This can be viewed as an action of $\RR$ on $M$. We will show in Proposition \ref{PDivergenceOfX} that the integral curves of $X$ are geodesics for every $G$--invariant metric. 
 \item[(b)] If $\mu$ is a $G$--invariant metric, the length of $X$, $\sqrt{\mu(X,X)}$, as well as the divergence of $X$ with respect to $\mu$ are constant functions on $M$. Surely $\mu(X,X)$ is non--zero and $\div X$ is zero if $(M,G)$ admits compact quotients (cf. \cite{Thurston1997}). Moreover as the next proposition will show, $\div X$ does not depend on the $G$--invariant metric and indicates when $X$ is a Killing field.
\end{enumerate}
\end{rem}

\begin{prop}\label{PDivergenceOfX}
The divergence of $X$ with respect to a $G$--invariant metric does not depend on the metric and $\div X=0$ if and only if $X$ is a Killing field. Furthermore the integral curves of $X$ are geodesics for $G$--invariant metrics.
\end{prop}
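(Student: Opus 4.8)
The plan is to reduce every assertion to the representation theory of $K\cong\SOg(2)$ on $V=T_{m_0}M$ via Propositions \ref{PRepresentationSO(2)} and \ref{PKInvariantEndos}: a $G$--invariant tensor field on $M$ is completely determined by a single $K$--invariant tensor on $V$, and the splitting $TM=\Dc\oplus W$ with $\Dc=\RR X$ and $W=\Dc^{\perp}$ does not depend on the chosen $G$--invariant metric, by Proposition \ref{PRepresentationSO(2)}. The first step is to list all $G$--invariant metrics. Fix one, $\mu$ (it exists since $K$ is compact); any other $G$--invariant metric $\mu'$ corresponds to a $G$--invariant, $\mu$--self-adjoint, positive endomorphism whose value at $m_0$ is $K$--equivariant, so by Proposition \ref{PKInvariantEndos} --- together with self-adjointness and positivity, which force the rotation $k$ there to be $\id$ --- it equals $\diag(a,\lambda\cdot\id_W)$ for \emph{constants} $a,\lambda>0$, constancy coming from $G$--invariance and transitivity. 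Hence $\mu'=a\,\mu_L+\lambda\,\mu_W$, where $\mu_L,\mu_W$ denote the $\Dc$-- and $W$--parts of $\mu$, and $\vol_{\mu'}=\sqrt a\,\lambda\cdot\vol_\mu$ is a constant multiple of $\vol_\mu$.

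Next comes the geodesic claim, which only uses $\Dc=\RR X$. Fix a $G$--invariant metric $\mu$ with Levi--Civita connection $\nabla$. As $G$ acts by $\mu$--isometries and $X$ is $G$--invariant, $\nabla_X X$ is again a $G$--invariant vector field, so $\nabla_X X|_{m_0}$ is fixed by the isotropy representation and therefore lies in $\Dc_{m_0}$ by Proposition \ref{PRepresentationSO(2)}. On the other hand $2\mu(\nabla_X X,X)=X\bigl(\mu(X,X)\bigr)=0$, because $\mu(X,X)$ is $G$--invariant and $G$ acts transitively, so $\mu(X,X)$ is constant; thus $\nabla_X X$ is $\mu$--orthogonal to $X$, i.e. lies in $W_{m_0}$. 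Since $\Dc_{m_0}\cap W_{m_0}=0$ we get $\nabla_X X=0$, so the integral curves of $X$ are geodesics for every $G$--invariant metric.

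For the divergence: from $(\div_{\mu'} X)\,\vol_{\mu'}=L_X\vol_{\mu'}$ and the fact that $\vol_{\mu'}$ is a constant multiple of $\vol_\mu$ one reads off $\div_{\mu'} X=\div_\mu X$, so $\div X$ is metric--independent. If $X$ is Killing for some $G$--invariant metric, its flow preserves that metric, hence its volume form, whence $\div X=0$. Conversely, $L_X\mu$ is a $G$--invariant symmetric $(0,2)$--tensor, hence --- applying Proposition \ref{PKInvariantEndos} to bilinear forms via $\mu$, or directly Schur's lemma, using that $W$ is an irreducible nontrivial $K$--module while $\Dc$ is a trivial one --- of the form $L_X\mu=p\,\mu_L+q\,\mu_W$ with constants $p,q\in\RR$. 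Contracting twice with $X$ and using $[X,X]=0$ and constancy of $\mu(X,X)$ gives $(L_X\mu)(X,X)=0$, so $p=0$; taking the $\mu$--trace and using $\mathrm{tr}_\mu(\mu_W)=\dim W=2$ gives $\div X=\tfrac12\,\mathrm{tr}_\mu(L_X\mu)=q$. Hence $\div X=0$ forces $L_X\mu=0$, i.e. $X$ is a Killing field for $\mu$.

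The main obstacle is precisely this structural input: one has to make sure that the horizontal distribution $W$ is the same for all $G$--invariant metrics and that $G$--invariance forces the a priori functions $a,\lambda$ (and the coefficients $p,q$) to be constants, so that $\vol_{\mu'}$ really is a constant multiple of $\vol_\mu$ and $L_X\mu$ a constant combination of $\mu_L,\mu_W$. Once that is in place, the only genuinely new observation is that constancy of $\mu(X,X)$ annihilates the $\mu_L$--part of $L_X\mu$; the remaining ingredients are the standard identities $(\div_\mu X)\,\vol_\mu=L_X\vol_\mu=\tfrac12\bigl(\mathrm{tr}_\mu L_X\mu\bigr)\vol_\mu$ and $X\bigl(\mu(X,X)\bigr)=2\mu(\nabla_X X,X)$.
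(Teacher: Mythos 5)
Your proof is correct and follows essentially the same route as the paper: the geodesic and Killing-field claims are reduced to the $K$--invariant decomposition $V=L\oplus W$ via Propositions \ref{PRepresentationSO(2)} and \ref{PKInvariantEndos}, the $L$--component of $\mathcal L_X\mu$ is killed by constancy of $\mu(X,X)$, and metric-independence of $\div X$ comes from the two volume forms being constant multiples of one another. The only (harmless) difference is that you derive that proportionality by explicitly classifying all $G$--invariant metrics, whereas the paper simply notes that any two $G$--invariant volume forms differ by a constant scalar.
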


\begin{proof}
Let $\mu$ be a $G$--invariant metric. The metric $\mu$ induces a global $G$--invariant Riemannian volume form $v_\mu$ since $M$ is orientable, which follows from the fact that $(M,G)$ has connected isotropy groups. Thus if $\mu'$ is another $G$--invariant metric and $v_{\mu'}$ its corresponding $G$--invariant volume form, then there exists a non--zero scalar $\lambda \in \RR$ such that $ v_{\mu'}=\lambda v_\mu$. Hence
\[
 (\div_{\mu' }X)v_{\mu'}=\mathcal L_X v_{\mu'}=\lambda \mathcal L_Xv_\mu=(\div_\mu X)v_{\mu'}
\]
which shows that $\div_\mu X$ does not depend on $\mu$.
 
Fix a point $m_0 \in M$ and let $K$ be the isotropy group in $m_0$. The bilinear form $\mathcal L_X \mu$ on $M$ is $G$--invariant and hence it is determined by the symmetric bilinear form $\beta:=(\mathcal L_X\mu)_{m_0}$. Let $V:=TM_{m_0}=L \oplus W$ be the $K$--invariant decomposition (see Proposition \ref{PRepresentationSO(2)}) and denote by $f \colon V \to V$ the $K$--invariant endomorphism defined as $\mu_{m_0}(f(v),w)=\beta(v,w)$ for all $v,w \in V$, which is self-adjoint with respect to $\mu_{m_0}$. Note that since $\div X$ is constant we have $2\div X=\tr f$. With Proposition \ref{PKInvariantEndos}  it follows that $f$ is represented as
\[
 \left(
 \begin{matrix}
 \lambda_1 & 0 \\
 0 & \lambda_2\id_W \\
 \end{matrix}
 \right) 
\]
where $\lambda_1,\lambda_2 \in \RR$. For all $l \in L$ we have $\beta(l,l)=0$ thus $\lambda_1 =0$. Hence the vector field $X$ is a Killing field if and only if $\lambda_2=0$. But this is equivalent to $\div X=0$ since $\div X=\frac{1}{2}\tr f=\lambda_2$.

Suppose $\mu$ is a $G$--invariant metric on $(M,G)$ and $\nabla$ its Levi--Civita connection. We have $k.(\nabla_{X_{m_0}} X)=\nabla_{X_{m_0}} X$ for all $k \in K$, so there is a $\lambda \in \RR$ such that $\nabla_{X_{m_0}} X=\lambda X_{m_0}$. But since $X$ has constant length with respect to $\mu$, the vector $\nabla_{X_{m_0}} X$ lies orthogonal to $X_{m_0}$, hence $\lambda=0$. This shows that the integral curves of $X$ are indeed geodesics since $\nabla_X X$ is $G$--invariant.
\end{proof}

Everything is set up to prove that the quotient of $M$ by the flow of $X$ is indeed a smooth manifold.

\begin{prop}
The action of $\RR$ through the flow of $X$ is either free or descends to a free circle action of $S^1$ on $M$.
\end{prop}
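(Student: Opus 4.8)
The plan is to analyze the flow $\Phi \colon \RR \times M \to M$ of $X$ and show that the isotropy group of every point under this $\RR$-action is the same closed subgroup $H \subseteq \RR$, so that the action descends to a free action of $\RR/H$, which is either $\RR$ itself (when $H = 0$) or a circle (when $H = t_0\ZZ$ for some $t_0 > 0$). First I would fix $m_0 \in M$ and let $H := \{t \in \RR : \Phi_t(m_0) = m_0\}$ be the stabilizer of $m_0$; since $\Phi$ is smooth and $X_{m_0} \neq 0$, the orbit map is an immersion near $0$, so $H$ is discrete, hence either trivial or of the form $t_0 \ZZ$. The key point is that $H$ does not depend on the chosen point: for any other $m \in M$ write $m = g.m_0$ with $g \in G$, and use that the flow of $X$ commutes with every element of $G$ (Remark (a), referring to Remark \ref{RCommutingActions}) to get $\Phi_t(m) = \Phi_t(g.m_0) = g.\Phi_t(m_0)$; hence $\Phi_t(m) = m$ iff $g.\Phi_t(m_0) = g.m_0$ iff $\Phi_t(m_0) = m_0$, so the stabilizer of $m$ equals $H$.

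Once the common stabilizer $H$ is identified, the induced map $\RR/H \times M \to M$ is a well-defined action, and it is free by construction. If $H = 0$ the original $\RR$-action is already free and we are in the first case. If $H = t_0\ZZ$ then $\RR/H \cong S^1$ and we obtain a free $S^1$-action on $M$; rescaling the parameter (equivalently, rescaling $X$ by the constant $1/t_0$, which is harmless since we only care about the foliation and can always normalize the length of $X$) turns this into a free action of $S^1 = \RR/\ZZ$. The only genuinely substantive ingredient is the commutation of the flow with $G$, which is already provided, together with the transitivity of $G$ on $M$; everything else is elementary structure theory of one-parameter subgroups.

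The step I expect to require the most care is verifying cleanly that discreteness of $H$ together with the above conjugation argument really forces a \emph{free} action of the quotient group, i.e. ruling out any subtlety where the $\RR$-action fails to be proper; but since $X$ is a $G$-invariant nowhere-vanishing vector field on a manifold on which $G$ acts transitively, the orbit map of $\RR$ through any point is an injective immersion after passing to $\RR/H$, and this is enough to conclude. (The properness of the flow action, needed later to give $N = M/\!\!\sim$ a manifold structure, is deferred to Corollary \ref{CProperAction} and is not required here.) I would therefore keep the proof short: establish discreteness of the stabilizer at one point, propagate it to all points via $G$-equivariance of the flow, and conclude that the descended action of $\RR/H$ is free, with $\RR/H \in \{\RR, S^1\}$.
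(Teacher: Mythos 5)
Your proposal is correct and follows essentially the same route as the paper: identify the stabilizer of a point as a discrete (closed, $0$--dimensional) subgroup of $\RR$, use the commutation of the flow with the transitive $G$--action to see that this stabilizer is the same at every point, and pass to the induced free action of $\RR/\ZZ \cong S^1$ when it is nontrivial. Your explicit justification of discreteness via the nonvanishing of $X_{m_0}$ is a slightly more detailed version of the paper's terse remark, but the argument is the same.
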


\begin{proof}
Suppose the action of $\RR$ is not free. Let $Z_m$ be the isotropy group of $\RR$ in the point $m \in M$ and since $Z_m$ has to be a closed $0$--dimensional subgroup of $\RR$ we obtain that $Z_m$ is isomorphic to $\ZZ$. The group $Z_m$ fixes now any other point on $M$ since the flow $\Phi$ and the action of $G$ commute. Hence we may pass to an induced free action $\Psi \colon \RR/\ZZ \times M \to M$. 
\end{proof}

In the case of a circle action we get a smooth surface $N$ as the quotient $M/S^1$ (i.e. the leaf space of the foliation) and moreover the quotient map $\pi \colon M \to N$ is a circle principal bundle over $N$ since $S^1$ is compact. We would like to obtain a similar result in the case of a free $\RR$ action on $M$. The key property is thereby that $\RR$ has to act properly on $M$, i.e. the map $f \colon \RR \times M \to M \times M$, $(t,m)\mapsto (t.m,m)$ has to be a proper map between topological spaces.

To prove this it is sufficient to show that if $(m_i)_{i \in \NN}$ is a converging sequence in $M$ and $(t_i.m_i)_{i\in \NN}$ converges as well then $(t_i)$ has a converging subsequence. From now on we assume a free $\RR$ action on $M$. We define $\Fix(K)=\{m \in M : k.m=m \text{ for all } k \in K\}$ the fixed--point set of $K$. For every $G$--invariant metric $\Fix(K)$ is a totally geodesic, embedded and closed submanifold of $M$, see \cite[p.59]{Kobayashi1995}.

\begin{prop}\label{POrbitsAreFixedPointSets}
The orbit $\RR.m_0$ is the connected component of $\Fix(K)$ containing $m_0$.
\end{prop}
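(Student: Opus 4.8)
The plan is to prove the two sets coincide by first showing $\RR.m_0\subseteq\Fix(K)$, then identifying $\Fix(K)$ near such points as a one--dimensional curve on which $X$ is a nowhere--vanishing tangent field, and finally using completeness of the flow to see that the orbit of $m_0$ already fills up a whole connected component. For the first point: since $K$ is the isotropy group at $m_0$ we have $m_0\in\Fix(K)$, and because the flow $\Phi$ of $X$ commutes with every element of $G$ (see the remark above, or Remark~\ref{RCommutingActions}), for $k\in K$ and $t\in\RR$ we get $k.\Phi_t(m_0)=\Phi_t(k.m_0)=\Phi_t(m_0)$. Hence $\RR.m_0\subseteq\Fix(K)$, and being connected it is contained in the component $C$ of $\Fix(K)$ through $m_0$; moreover the very same computation shows that $\Fix(K)$ is invariant under the flow of $X$.

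Next I would compute the tangent spaces of $\Fix(K)$. For $m\in\Fix(K)$ the isotropy group $G_m$ contains $K$; as $M$ is simply connected all isotropy groups are connected, so $G_m\cong\SOg(2)$, and since $K$ is a connected one--dimensional subgroup it must equal $G_m$. Therefore $T_m\Fix(K)=(T_mM)^{G_m}=\mathcal D_m=\RR\cdot X_m$ by Proposition~\ref{PRepresentationSO(2)} and the corollary producing $X$. In particular $\Fix(K)$ is a one--dimensional submanifold (embedded, by the fact cited just before the proposition), and $X$ restricts to a nowhere--zero vector field tangent to each of its connected components; hence every component is a connected one--manifold, i.e. diffeomorphic to $\RR$ or to $S^1$.

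Finally, $C$ is flow--invariant: $\Fix(K)$ is flow--invariant by the first step, $\Phi_t$ is a diffeomorphism of $M$ permuting the components of $\Fix(K)$, and $s\mapsto\Phi_s(m_0)$ is a path inside $\Fix(K)$ joining $m_0$ to $\Phi_t(m_0)$, so $\Phi_t(C)=C$ for all $t$. Thus $\Phi$ restricts to a complete flow on the connected one--manifold $C$ with nowhere--zero generator $X|_C$. If $C\cong S^1$ this flow would have a periodic orbit, contradicting the standing assumption that the $\RR$--action is free (and in any case its orbit would be all of $S^1$); so $C\cong\RR$. Writing $X|_C=f\,\partial_s$ with $f$ of constant sign, the integral curve $t\mapsto\Phi_t(m_0)$ is strictly monotone, and were it to converge to a point $p\in C$ as $t\to+\infty$ its speed $|f(\Phi_t(m_0))|$ would tend to $|f(p)|>0$, forcing an infinite total displacement, which is absurd; likewise as $t\to-\infty$. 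Hence the integral curve surjects onto $C$, that is $\RR.m_0=C$.

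The genuinely delicate point is the last one, the completeness argument forcing the orbit to exhaust the component $C$; the identification $K=G_m$ and the determination of $T_m\Fix(K)$ are routine consequences of the $\SOg(2)$--representation theory already set up, and the topological facts about components of $\Fix(K)$ are elementary.
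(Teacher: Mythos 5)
Your proof is correct, but it reaches the inclusion $F\subseteq\RR.m_0$ by a different mechanism than the paper. Both arguments share the first half ($\RR.m_0\subseteq F$ via commuting actions) and both ultimately rest on the identification $T_m\Fix(K)=(T_mM)^{K}=\mathcal D_m=\RR X_m$. The paper, however, finishes with Riemannian geometry: $\Fix(K)$ is totally geodesic and closed, hence complete, so any $m\in F$ is joined to $m_0$ by a geodesic inside $F$ whose initial velocity must be $\lambda X_{m_0}$; since the integral curves of $X$ are affinely parametrized geodesics (the last part of Proposition~\ref{PDivergenceOfX}), that geodesic is $t\mapsto(t\lambda).m_0$ and so $m\in\RR.m_0$. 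You instead run a purely dynamical argument: $C$ is a flow--invariant connected $1$--manifold carrying a complete nowhere--vanishing vector field, hence is $\RR$ or $S^1$, and in either case a single orbit exhausts it (your monotonicity/infinite--displacement argument in the $\RR$ case is sound, and the $S^1$ case is excluded by freeness or handled directly). What the paper's route buys is brevity, at the cost of invoking Hopf--Rinow on $F$ and the geodesic property of the orbits; what your route buys is independence from the metric structure beyond the fixed--point--set theorem --- you never need to know that the orbits are geodesics, only that $X$ spans $T\Fix(K)$. Both are legitimate; your version makes explicit the tangent--space computation that the paper uses only implicitly when it asserts $\dot\gamma(0)=\lambda X_{m_0}$.
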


\begin{proof}
$F$ shall denote the connected component of $\Fix(K)$ containing $m_0$. Then since the actions of $G$ and $\RR$ commute (see Remark \ref{RCommutingActions}) we have $\RR.m_0 \subset F$. Choose now a $G$--invariant metric and let $m \in F$. So there is a geodesic $\gamma$ lying in $F$,  starting at $m_0$ and ending in $m$. Moreover there is a $\lambda  \in \RR$ such that $\dot\gamma(0)=\lambda X_{m_0}$ and therefore $\gamma(t)=\exp_{m_0}(t\lambda X_{m_0})=(t\lambda).m_0$ since $\beta :\colon t\mapsto t.m_0$ is the geodesic with $\dot\beta(0)=X_{m_0}$. This shows $F\subset \RR.m_0$.
\end{proof}

\begin{cor}\label{COrbitsAreEmbedded}
The orbits $\RR.m$ for $m \in M$ are embedded, closed submanifolds of $M$.
\end{cor}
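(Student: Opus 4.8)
The plan is to reduce the statement to a single orbit and then read it off Proposition~\ref{POrbitsAreFixedPointSets}. First I would exploit transitivity together with the commutation of the two actions: given $m \in M$, pick $g \in G$ with $m = g.m_0$; since the flow of $X$ commutes with the $G$--action (Remark~\ref{RCommutingActions}) one has $g.(t.m_0) = t.(g.m_0) = t.m$ for every $t \in \RR$, so the diffeomorphism $g \in \Diff(M)$ maps $\RR.m_0$ onto $\RR.m$. Hence it suffices to prove that $\RR.m_0$ is an embedded, closed submanifold of $M$, and the case of an arbitrary $m$ then follows by applying the diffeomorphism $g$.

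Next I would show that the connected component $F$ of $\Fix(K)$ containing $m_0$ is an embedded, closed submanifold of $M$. Fix a $G$--invariant metric; then $\Fix(K)$ is an embedded, closed, totally geodesic submanifold of $M$, with $T_{m_0}\Fix(K)$ equal to the subspace of $TM_{m_0}$ fixed by the isotropy action, which by Proposition~\ref{PRepresentationSO(2)} is $L = \RR X_{m_0}$; in particular $\dim \Fix(K) = 1$. Because a manifold is locally connected, the connected components of $\Fix(K)$ are open in $\Fix(K)$, hence also closed in $\Fix(K)$ and therefore closed in $M$ (as $\Fix(K)$ is itself closed in $M$); and an open subset of an embedded submanifold is again an embedded submanifold. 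So $F$ is an embedded, closed, $1$--dimensional submanifold of $M$.

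Finally, Proposition~\ref{POrbitsAreFixedPointSets} identifies $\RR.m_0$ with $F$ as subsets of $M$, which together with the first paragraph finishes the proof. If in addition one wants the orbit map $\RR \to F$, $t \mapsto t.m_0$, to be a diffeomorphism onto $F$ (so that the orbit, regarded as a homogeneous space, carries exactly the embedded structure inherited from $\Fix(K)$), one observes that under the standing assumption of a free $\RR$--action it is a bijective smooth immersion between two $1$--manifolds, hence a bijective local diffeomorphism, hence a diffeomorphism. The argument is essentially formal: nothing is needed beyond Proposition~\ref{POrbitsAreFixedPointSets} except the standard description of the tangent space of the fixed--point set of an isometric action and the elementary fact that connected components of a (locally connected) embedded closed submanifold are again embedded and closed, so there is no genuine obstacle here — the only thing to be a little careful about is not to conflate the a priori immersed orbit with the embedded component $F$.
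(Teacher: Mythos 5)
Your proof is correct and follows exactly the route the paper intends: the corollary is left without proof there precisely because it is meant to be read off Proposition~\ref{POrbitsAreFixedPointSets} together with the earlier remark that $\Fix(K)$ is an embedded, closed (totally geodesic) submanifold, plus homogeneity via the commuting $G$--action. Your write-up simply supplies the routine details (components of a closed embedded submanifold are closed embedded submanifolds, and $g \in \Diff(M)$ carries $\RR.m_0$ onto $\RR.m$), so there is nothing to object to.
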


\begin{cor}\label{CProperAction}
The action of $\RR$ through the vector field $X$ is proper.
\end{cor}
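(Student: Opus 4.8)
The plan is to verify the criterion recalled just before Proposition~\ref{POrbitsAreFixedPointSets}: keeping the standing assumption that the $\RR$--action is free, it suffices to show that whenever $(m_i)_{i\in\NN}$ converges in $M$ and $(t_i.m_i)_{i\in\NN}$ converges in $M$, the sequence $(t_i)_{i\in\NN}$ has a convergent subsequence. The first ingredient is that for every $p\in M$ the orbit map $\beta_p\colon\RR\to M$, $\beta_p(t)=t.p$, is a \emph{proper} map. Indeed, by Corollary~\ref{COrbitsAreEmbedded} the orbit $\RR.p$ is a $1$--dimensional closed embedded submanifold of $M$; since the action is free, $\beta_p$ is injective, and it is an immersion because $X$ has no zeros, so it is a bijective immersion between $1$--manifolds, hence a diffeomorphism onto $\RR.p$. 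Composing with the inclusion $\RR.p\hookrightarrow M$ of the closed subset $\RR.p$ (which is proper) shows that $\beta_p$ is proper.

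The second and decisive step is a transport argument using the transitive action of $G$; say $m_i\to m$ and $t_i.m_i\to q$. Identify $M$ with $G/K$; since $K$ is compact, $\pi_G\colon G\to M$ is a principal $K$--bundle and therefore admits a local section $s$ on a neighbourhood $U$ of $m$, normalised so that $s(x).m_0=x$. For $i$ large we have $m_i\in U$; set $g_i:=s(m_i)$ and $g:=s(m)$, so $g_i\to g$ in $G$ and $g_i.m_0=m_i$. Since the flow of $X$ commutes with the action of $G$ (Remark~\ref{RCommutingActions}), and using continuity of the $G$--action and of inversion in $G$,
\[
  t_i.m_i \;=\; t_i.(g_i.m_0)\;=\;g_i.(t_i.m_0)
  \quad\Longrightarrow\quad
  t_i.m_0 \;=\; g_i^{-1}.(t_i.m_i)\;\lr\; g^{-1}.q .
\]
Hence $(t_i.m_0)_{i}$ converges in $M$, so it lies in a compact subset of $M$; by properness of $\beta_{m_0}$ its preimage is compact, i.e. $(t_i)$ is bounded in $\RR$ and thus has a convergent subsequence, as required.

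I expect the genuine obstacle to be precisely this second step. The flow maps $\Phi^t$ are nowhere near equicontinuous as $t\to\infty$, so there is no way to compare $t_i.m_i$ with $t_i.m$ directly; what rescues the argument is that $G$ acts transitively and commutes with the flow, while the ``transition elements'' $g_i$ that carry $m_i$ onto the single reference orbit through $m_0$ can be taken to converge exactly because $K$ is compact (equivalently, $\pi_G$ is proper). Once the situation has been transported to the orbit $\RR.m_0$, properness there is immediate from Corollary~\ref{COrbitsAreEmbedded}. One could equally use the alternative criterion that $\{t\in\RR:\ t.C\cap C\neq\emptyset\}$ is relatively compact for every compact $C\subseteq M$, and run the same transport argument with $C$ a closed metric ball (compact by Hopf--Rinow, since $G$--invariant metrics on $M$ are complete).
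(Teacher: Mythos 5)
Your proof is correct and follows essentially the same route as the paper: transport $t_i.m_i$ back to the reference orbit $\RR.m_0$ via group elements $g_i$ with $g_i.m_0=m_i$ that converge (you use a local section of $G\to M$, the paper extracts a convergent subsequence using compactness of the fibres of $G\to M$), invoke the commutation of the flow with the $G$--action, and conclude from Corollary~\ref{COrbitsAreEmbedded}. Your explicit verification that the orbit map $\beta_{m_0}$ is proper merely spells out the final step that the paper leaves implicit.
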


\begin{proof}
Let $(m_i)$ be a converging sequence in $M$ and let $(t_i)$ be a sequence in $\RR$ such that $(t_i.m_i)$ converges as well. We will show that $(t_i)$ has a converging subsequence which implies that the action is proper. Choose a sequence $(g_i)$ in $G$ such that $m_i=g_i.m_0$ for all $i \in \NN_0$. Moreover the principal bundle $p \colon G \to M$, $p(g)=g.m_0$ has compact fibers and since $(m_i)$ converges, we may pass to a subsequence such that $(g_i)$ converges in $G$ which implies that $g^{-1}_i.(t_i.m_i)$ converges. If we use that the actions of $G$ and $\RR$ commute we see that $g^{-1}_i.(t_i.m_i)=t_i.m_0$ converges and with Corollary \ref{COrbitsAreEmbedded} we obtain that $(t_i)$ converges to some point in $\RR$.
\end{proof}

Now by the Quotient Manifold Theorem (see \cite[p. 218]{Lee2003}) the space $M/\RR$ has a unique manifold structure such that the canonical projection $\pi \colon M \to M/\RR$ is an $\RR$-principal bundle. This motivates the following 

\begin{dfn}
Let $H \in \{\SOg(2),\RR\}$. We set $N:=M/H$ through the action considered above. Moreover let $\pi \colon M \to N$ denote the $H$--principal bundle over the smooth surface $N$.
\end{dfn}

Applying the long exact homotopy sequence to the fibration $H \to M \to N$ we conclude that $N$ has to be simply connected since $H$ is connected and $M$ simply connected.

\subsection{The $G$--invariant connection on $M \to N$}\label{STheGInvariantConnectionOnMtoN}

Now we are ready to study the canonical $G$-invariant connection on the principal bundle $H \to M \to N$.

Let $\mu$ be a $G$--invariant metric on $M$ and let $\omega$ be the dual form of $X$. Proposition \ref{PRepresentationSO(2)} tells us that the kernel of $\omega$ does not depend on the choice of the $G$--invariant metric $\mu$. Moreover, since $\mu$ and $X$ are $G$--invariant we obtain that $\omega$ is $G$-invariant as well and we may choose $\mu$ such that $\omega(X)=1$. 

We start with some technical propositions which will be needed to prove that $\omega$ is indeed a connection.

\begin{rem}\label{RCommutingActions}
Let $\Phi\colon \RR \times M \to M$ be the flow of $X$. For every $g \in G$ the curves $t\mapsto \Phi^t(g.m)$ and $t\mapsto g.\Phi^t(m)$ are both integral curves of $X$ and they coincide in $t=0$. Hence $\Phi^t(g.m)=g.\Phi^t(m)$ for all $t \in \RR$ which shows that the action of $G$ and the flow of $X$ commute which in turn implies that the actions of $H$ and $G$ commute.
Now if $m_0 \in M$, $F$ its orbit under $H$ and $K$ the isotropy group in $m_0$ then $K$ is the isotropy group for all points $m \in F$ since $H$ and $K$ commute.
\end{rem}

\begin{prop}\label{PGCommutesWithIsotropyGroup}
Let $m_0 \in M$, $F$ its orbit under $H$ and $K$ the isotropy group in $m_0$. The set 
\[
 K_F=\{g \in G : g.m_0 \in F\}
\]
is a closed, connected Lie subgroup of $G$ and there is a natural epimorphism $\Psi \colon K_F \to H$ with $\ker\Psi=K$. 
\end{prop}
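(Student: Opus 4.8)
The plan is to exhibit $K_F$ as the preimage of a point-orbit under a suitable smooth map and then build the epimorphism $\Psi$ by hand. First I would observe that $F = H.m_0$ is, by Corollary \ref{COrbitsAreEmbedded} (or compactness of $S^1$ in the circle case), an embedded closed submanifold of $M$; hence $K_F = p^{-1}(F)$, where $p\colon G \to M$, $p(g) = g.m_0$, is the principal $K$--bundle from the excerpt. Since $p$ is a submersion and $F$ is an embedded closed submanifold, $K_F$ is an embedded closed submanifold of $G$; because $K_F$ is visibly closed under multiplication and inversion (if $g.m_0, g'.m_0 \in F = H.m_0$ and $H$, $G$ commute, then $(gg').m_0 \in H.(g'.m_0) \subset H.H.m_0 = H.m_0$, using Remark \ref{RCommutingActions}), it is a closed Lie subgroup of $G$. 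Connectedness I would get from the long exact homotopy sequence of the fibration $K \to K_F \to F$: since $K \cong \SOg(2)$ is connected and $F$ is connected (it is a single $H$--orbit, $H$ connected), $K_F$ is connected.

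Next I would construct $\Psi$. Every $g \in K_F$ moves $m_0$ to a point $g.m_0 \in F = H.m_0$, and since $H$ acts \emph{freely} on $M$ (this is exactly the content of the preceding propositions: either the free $\RR$--action or the free $S^1$--action), there is a \emph{unique} $h \in H$ with $h.m_0 = g.m_0$; set $\Psi(g) := h$. This is a homomorphism: if $\Psi(g) = h$ and $\Psi(g') = h'$, then using that $H$ and $G$ commute, $(gg').m_0 = g.(g'.m_0) = g.(h'.m_0) = h'.(g.m_0) = h'.(h.m_0) = (h'h).m_0 = (hh').m_0$, so $\Psi(gg') = hh' = \Psi(g)\Psi(g')$. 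Smoothness of $\Psi$ follows because $m \mapsto (\text{the unique } h \text{ with } h.m_0 = m)$ is smooth on $F$ — it is the inverse of the orbit map $H \to F$, $h \mapsto h.m_0$, which is a diffeomorphism by freeness and properness (Corollary \ref{CProperAction}) — composed with the smooth map $g \mapsto g.m_0$. Surjectivity: given $h \in H$, the element $g := $ (any preimage won't do directly, but) — here I would simply note that $H$ acts on $M$ and the orbit $F$ equals $H.m_0$, and I want $g \in G$ with $g.m_0 = h.m_0$; the obvious candidate is to realize $h$ itself as coming from $G$, which is not automatic. Instead, surjectivity is cleaner via the kernel computation plus a dimension/connectedness count, or by noting that $\Psi$ has a local section; I will say more below. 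Finally, $\ker\Psi = \{g \in G : \Psi(g) = e_H\} = \{g : g.m_0 = m_0\} = K$, which is immediate.

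The one genuinely delicate point is \textbf{surjectivity of $\Psi$}, since a priori the $H$--translate $h.m_0$ need not be reached by an element of $G \supset \Diff(M)$... wait, $H$ itself sits inside $\Diff(M)$ but not necessarily inside $G$. The right argument: $\Psi$ is a smooth homomorphism of Lie groups whose image is a subgroup of the connected $1$--dimensional group $H$; I would show the image is not trivial (e.g. because $K_F$ strictly contains $K$: by \RHS transitivity of $G$ on $M$, for any $m \in F$ there is $g \in G$ with $g.m_0 = m$, so $K_F$ surjects onto $F$ under $p$, and $\dim K_F = \dim K + \dim F = 1 + 1 = 2 > 1 = \dim K$), hence $\de\Psi_{e}$ is nonzero, so $\Psi$ is a submersion at $e$, so its image is an open subgroup of $H$, hence all of $H$ by connectedness. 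This last paragraph — getting that $\dim K_F = 2$ and that $\Psi$ is therefore a submersion — is where the real content lies; everything else is bookkeeping with the commuting actions of $G$ and $H$ established in Remark \ref{RCommutingActions}.
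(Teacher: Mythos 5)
Your proof is correct and follows the same basic architecture as the paper's: define $\Psi(g)$ as the unique $h\in H$ with $h.m_0=g.m_0$ (using freeness of the $H$--action), verify the homomorphism property via the commuting of $G$ and $H$ from Remark \ref{RCommutingActions}, identify $\ker\Psi=K$, and get connectedness of $K_F$ from connectedness of $K$ and of $H\cong F$. The one place you diverge is the step you single out as ``genuinely delicate'': surjectivity. Your worry that $h$ itself need not lie in $G$ is legitimate but beside the point, and the submersion/open-subgroup workaround, while valid, is a detour. The paper disposes of surjectivity in one line: given $h\in H$, the point $h.m_0$ lies in $M$, so transitivity of $G$ yields some $g\in G$ with $g.m_0=h.m_0$; this $g$ lies in $K_F$ by definition and $\Psi(g)=h$. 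You in fact already write down exactly this observation inside your dimension count (``for any $m\in F$ there is $g\in G$ with $g.m_0=m$, so $K_F$ surjects onto $F$ under $p$''), and since $\Psi$ is just $p|_{K_F}$ followed by the inverse of the orbit bijection $H\to F$, that sentence alone is the surjectivity proof. So nothing is missing, but the ``real content'' is not where you located it; the whole proposition is bookkeeping once one knows that the $H$--action is free with closed orbits and that $G$ and $H$ commute.
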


\begin{proof}
Using that $G$ and $H$ commute we obtain that $K_F$ is indeed a subgroup of $G$. Moreover $K_F$ is closed since the orbit $F$ is closed (see Corollary \ref{COrbitsAreEmbedded}).  We define $\Psi$ as follows: for $g \in K_F$ there is a unique $h \in H$ such that $g.m_0=h.m_0$, so define $\Psi(g):=h$. Clearly $\Psi$ is continuous and it is a homomorphism of groups.  Namely, if $g_1, g_2 \in G$ and $h_i = \Psi(g_i)$ for $i=1,2$, then $(g_1g_2).m_0=(h_1h_2).m_0=(\Psi(g_1)\Psi(g_2)).m_0$. Hence $\Psi$ is a Lie group homomorphism. $\Psi$ is onto since $G$ acts transitively and the kernel of $\Psi$ is the isotropy group of the $G$--action in $m_0$, namely $K$.

It remains to show that $K_F$ is connected. Therefore note that $K_F/K=H$ and since $K$ as well as $H$ are connected groups it follows also that $K_F$ has to be  connected.
%
\end{proof}

\begin{cor}\label{CElementsOfKFCommuteWithK}
All elements of $K_F$ commute with $K$, with other words the extension 
\[
 1 \longrightarrow \SOg(2) \longrightarrow K_F \longrightarrow H \longrightarrow 1
\]
is a central extension.
\end{cor}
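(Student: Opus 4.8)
The plan is to show that the central extension claimed in Corollary \ref{CElementsOfKFCommuteWithK} follows almost immediately from the structural facts already assembled, in particular from Proposition \ref{PGCommutesWithIsotropyGroup} together with Remark \ref{RCommutingActions}. The statement to prove is that $K = \SOg(2) = \ker\Psi$ sits inside the centre of $K_F$; equivalently, that every $g \in K_F$ commutes with every $k \in K$.

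First I would recall from Remark \ref{RCommutingActions} that the action of $G$ on $M$ and the action of $H$ (the flow of $X$, possibly quotiented by $\ZZ$) commute, and that consequently $K$ — the isotropy group at $m_0$ — is simultaneously the isotropy group at every point of the orbit $F = H.m_0$. This is the geometric input. Now take $g \in K_F$ and $k \in K$. The goal is to check that $gkg^{-1} \in K$ and that it acts as $k$ does, or more directly that $g^{-1}kg = k$ as diffeomorphisms of $M$. Since $G$ acts effectively on $M$, it suffices to show that $g^{-1}kg$ and $k$ agree as diffeomorphisms, and by homogeneity and $G$-equivariance it is enough to compare them at a single point together with their differentials there, or simply to exploit that $g^{-1}kg$ fixes $m_0$.

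Here is the clean argument I would write out. Let $g \in K_F$ and set $h = \Psi(g) \in H$, so $g.m_0 = h.m_0$. For $k \in K$ consider $g^{-1} k g$. Since $h$ commutes with all of $G$ (Remark \ref{RCommutingActions}), in particular $h$ commutes with $g$ and with $k$; I would use this to compute $(g^{-1}kg).m_0$. We have $g.m_0 = h.m_0$, hence $k g . m_0 = k h . m_0 = h k . m_0 = h . m_0 = g.m_0$, so $g^{-1} k g . m_0 = m_0$, i.e. $g^{-1} k g \in K$. Thus conjugation by $g$ normalizes $K$. But $K \cong \SOg(2)$ is connected and its automorphism group induced by conjugation inside the connected group $K_F$ must be connected; since $\Aut(\SOg(2))$ has two components and the inner automorphisms of $\SOg(2)$ are trivial (it is abelian), the conjugation action of the connected group $K_F$ on $K$ lands in the identity component of $\Aut(\SOg(2))$, which is trivial. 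Hence $g^{-1} k g = k$ for all $k \in K$, $g \in K_F$, so $K$ is central in $K_F$, and the extension
\[
1 \longrightarrow \SOg(2) \longrightarrow K_F \longrightarrow H \longrightarrow 1
\]
is central.

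The only subtlety — and the step I would be most careful about — is the connectedness argument at the end: one must know that $K_F$ acts on $K$ by conjugation through a continuous homomorphism $K_F \to \Aut(K)$, that $\Aut(\SOg(2)) \cong \ZZ/2 \ltimes \{\id\}$ has trivial identity component, and that $K_F$ is connected, which is exactly the last line of Proposition \ref{PGCommutesWithIsotropyGroup}. Given all of that, the argument is forced. An alternative that avoids the automorphism-group bookkeeping is to note that $N_G(K)^\circ$ centralizes $K$ whenever $K$ is abelian and one works with connected groups — but I would prefer the explicit computation $g^{-1}kg.m_0 = m_0$ above, since it is self-contained and uses only the commuting of the $G$- and $H$-actions that is already in hand.
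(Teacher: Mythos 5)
Your argument is correct and follows essentially the same route as the paper: both proofs first observe that conjugation by any $g \in K_F$ preserves $K$ (the paper via the fact that $K$ is the isotropy group of every point of $F$, you via the explicit computation $(g^{-1}kg).m_0 = m_0$ using the commuting $G$- and $H$-actions), and then both conclude from $\Aut(\SOg(2)) \cong \ZZ_2$ and the connectedness of $K_F$ that the conjugation homomorphism is trivial. Your write-up just spells out the well-definedness of the conjugation map in a bit more detail than the paper does.
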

\begin{proof}
Consider the (smooth) conjugation map $C \colon K_F \to \Aut(\SOg(2))$, $C_g(k)=gkg^{-1}$. $C$ well--defined since $K$ fixes all points on $F$. But $\Aut(\SOg(2))\cong\ZZ_2$ and since $K_F$ is connected we obtain $C_g=\id_{\SOg(2)}$ for all $g \in K_F$.
\end{proof}

\begin{cor}\label{CGinvariantConnection}
The $1$--form $\omega$ is a $G$--invariant connection form for $\pi \colon M \to N$.
\end{cor}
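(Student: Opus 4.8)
Corollary \ref{CGinvariantConnection} claims that the 1-form $\omega$ (dual to $X$, normalized so $\omega(X)=1$) is a $G$-invariant connection form for the principal $H$-bundle $\pi\colon M\to N$.

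The plan is to verify the two defining axioms of a connection form together with $G$-invariance. First I would recall that $G$-invariance of $\omega$ is already established in the discussion preceding the technical propositions: since both the metric $\mu$ and the vector field $X$ are $G$-invariant, so is the dual 1-form, and the normalization $\omega(X)=1$ can be arranged because $\sqrt{\mu(X,X)}$ is a nonzero constant on $M$. So the genuine content is that $\omega$ is an $\mathfrak h$-valued connection form, where $\mathfrak h=\operatorname{Lie}(H)$ is 1-dimensional; identifying $\mathfrak h\cong\RR$, the reproduction axiom says $\omega$ applied to the fundamental vector field generated by $1\in\mathfrak h$ is identically $1$, and the equivariance axiom says $R_h^*\omega=\omega$ for all $h\in H$ (adjoint action is trivial since $H$ is abelian).

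For the reproduction axiom: the fundamental vector field of the $H$-action associated to $1\in\mathfrak h$ is precisely $X$ itself — indeed the $H$-action on $M$ is, by construction, the flow $\Phi$ of $X$ (in the $\RR$ case) or the descended circle action (in the $\SOg(2)$ case), so the infinitesimal generator is $X$. Hence $\omega$ evaluated on this fundamental field is $\omega(X)=1$, as required. For the equivariance axiom: the right translation $R_h$ on the principal bundle is the diffeomorphism $m\mapsto h.m=\Phi^t(m)$ for the appropriate $t$. By Remark \ref{RCommutingActions} the flow of $X$ commutes with the $G$-action, hence $\Phi^t$ preserves the $G$-invariant data $\mu$ and $X$ — concretely, $(\Phi^t)_*X=X$ since $X$ is the generator of its own flow, and $(\Phi^t)^*\mu=\mu$ because $\mathcal L_X\mu$ need not vanish in general... here one should be a little careful.

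The main obstacle is precisely this last point: $R_h^*\omega=\omega$ requires $(\Phi^t)^*\omega=\omega$, and while $(\Phi^t)_*X=X$ is automatic, the dual form $\omega$ is built using the metric $\mu$, so a priori one would want $\mathcal L_X\mu=0$, i.e. $X$ a Killing field — which is exactly the case ruled in or out by $\div X$. The resolution is that $\omega$ does not actually depend on $\mu$ the way it first appears: by Proposition \ref{PRepresentationSO(2)}, $\ker\omega=W$ is the canonical $G$-invariant plane distribution orthogonal to $L=\RR X$ for \emph{every} $G$-invariant metric, and $\omega$ is then the unique $G$-invariant 1-form with kernel this distribution and $\omega(X)=1$. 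Since the distribution $W$ is $\Phi^t$-invariant (because $W$ is $G$-invariant and one can equally obtain it from $\Phi^t$-transported metrics, or more directly: $\Phi^t$ commutes with $G$ hence carries the canonical distribution to a canonical distribution at the image point, which must be the same one by uniqueness), and $\Phi^t_*X=X$, the pulled-back form $(\Phi^t)^*\omega$ has the same kernel and the same value on $X$, so equals $\omega$. This gives $R_h^*\omega=\omega$ and completes the verification. I would phrase the proof by: (i) noting $G$-invariance and the metric-independence of $\ker\omega$ from Proposition \ref{PRepresentationSO(2)}; (ii) identifying the fundamental vector field of the $H$-action with $X$, giving the reproduction property; (iii) using Remark \ref{RCommutingActions} to see $\Phi^t$ preserves both $X$ and the canonical distribution $W$, hence $(\Phi^t)^*\omega=\omega$, which is the equivariance property (trivial adjoint action as $H$ is abelian).
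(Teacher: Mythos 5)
Your proof is correct and follows essentially the same route as the paper: identify the fundamental vector field of the $H$--action with $X$ to get $\omega(rX)=r$, and establish $H$--invariance of $\omega$ from the fact that the derivative of $\Phi^t$ is equivariant for the isotropy representation and therefore preserves the canonical splitting $L\oplus W$. The only (harmless) difference is that you invoke the uniqueness statement of Proposition \ref{PRepresentationSO(2)} directly on $\De\Phi^t$, whereas the paper first composes with an element $g\in K_F$ to reduce to a $K$--equivariant endomorphism at $m_0$ and then applies Corollary \ref{CElementsOfKFCommuteWithK} and Proposition \ref{PKInvariantEndos}; your observation that $\ker\omega$ is metric--independent is exactly the point that makes the argument go through without knowing that $X$ is Killing.
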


\begin{proof}
The Lie algebra $\mathfrak h$ of $H$ is spanned by the vector field $X$ and therefore we may identify $\mathfrak h$ with $\RR$ through $X$. In that way $\omega$ takes values in $\mathfrak h$ and the fundamental vector field of $r \in \RR$ is given by $rX$, hence $\omega(rX)=r$. We check now that $\omega$ is invariant under the action of $H$. For $h \in H$ and $\xi \in TM_{m_0}$ we obtain
\[
 h^*(\omega)_{m_0}(\xi)=\omega_{h.m_0}(h.\xi)=\mu_{h.m_0}(X_{h.m_0},h.\xi).
\]
Now choose $g \in G$ such that $g.h.m_0=m_0$ thus $g \in K_F$ where $F$ is the orbit of $m$ under $H$. Let $f \colon TM_{m_0} \to TM_{m_0}$ be the derivative of the diffeomorphism $m\mapsto g.h.m$ in $m_0$. We obtain
\[
 \mu_{h.m_0}(X_{h.m_0},h.\xi)=\mu_{m_0}(X_{m_0},f(\xi)).
\]
Using Corollary \ref{CElementsOfKFCommuteWithK} we see that $f$ commutes with elements of $K$, i.e. $f$ is a $K$--equivariant map. Using Proposition \ref{PKInvariantEndos} we obtain $\mu_{m_0}(X_{m_0},f(\eta))=0$ for $\eta \in \ker\omega_{m_0}$ and we conclude $\omega$ is indeed a connection form on $\pi \colon M \to N$.
\end{proof}

The fiber group $H$ of $\pi \colon M \to N$ is abelian and therefore the exterior derivative $\de\omega$ represents the curvature of $\omega$. Note since $\omega$ is $G$--invariant the curvature $\de\omega$ is $G$--invariant as well. Hence if $\de\omega$ is zero in a single point it is zero everywhere, i.e. $\omega$ is flat. We would like to study the flat case first. 

\subsection{Classification of geometries with flat connections}\label{SFlatConnection}

In this part we classify the axially symmetric geometries where the canonical  $G$--invariant connection is flat. The crucial fact is to deduce an extension of $G$ from the flatness of $\omega$.

Now since $\omega$ is flat and $M$ as well as $N$ are simply connected we obtain the 

\begin{cor}\label{CTrivialBundleIfFlat}
If $\omega$ is flat the $H$--principal bundle $\pi \colon M \to N$ is isomorphic to the trivial $\RR$--principal bundle $\pr_1 \colon N\times \RR \to N$ endowed with the trivial connection $\de t$ where $t:=\pr_2 \colon N \times \RR \to \RR$ is the projection onto the second factor. Moreover we have
\begin{enumerate}
 \item[(a)] The horizontal distribution is given by $\mathcal H=\ker\de t$ and a $G$--invariant vector field is obtained by taking the vector field to the flow $\Phi \colon \RR \times (N\times\RR) \to N \times \RR$, $\Phi^t(n,s)=(n,s+t)$ which shall be denoted by $\partial_t$.
 \item[(b)] For every $G$--invariant metric $\ker\de t$ is orthogonal to $\partial_t$ and $\de t$ has to be $G$--invariant.
\end{enumerate}

\end{cor}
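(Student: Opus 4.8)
### Proof Proposal for Corollary \ref{CTrivialBundleIfFlat}

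The plan is to invoke the standard correspondence between flat principal bundles over a simply connected base and the trivial bundle with the trivial connection, and then to trace through what this trivialisation does to the $G$-action. First I would note that a flat connection $\omega$ on a principal $H$-bundle $\pi \colon M \to N$ determines, via parallel transport, a holonomy homomorphism $\pi_1(N) \to H$; since $N$ is simply connected (established right after the definition of $N$) this holonomy is trivial, so the horizontal distribution $\Hc = \ker\omega$ integrates to a foliation whose leaves are covering spaces of $N$. As $N$ is simply connected, each leaf maps diffeomorphically onto $N$, and since $M$ is connected (indeed simply connected) one checks that $M$ is exactly one such leaf together with its $H$-translates; concretely, fixing $m_0 \in M$ and the leaf $L_0$ through it, the map $L_0 \times H \to M$, $(\ell,h) \mapsto h.\ell$, is an $H$-equivariant diffeomorphism which carries $\de t$ to $\omega$. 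Composing with the inverse of $\pi|_{L_0} \colon L_0 \xrightarrow{\sim} N$ gives the desired bundle isomorphism $N \times \RR \to M$ intertwining $\de t$ with $\omega$ (recall $H = \RR$ in the non-compact case, and the circle case is excluded because a nontrivial flat $S^1$-bundle over simply connected $N$ cannot occur — or rather, the trivialisation forces $H$ to act with $\RR$ as the structure group after pulling back; I would state this as: the flat bundle is trivial, hence $H = \RR$).

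For part (a), under the identification $M \cong N \times \RR$ the horizontal distribution $\Hc = \ker\omega$ becomes $\ker\de t$ by construction. The vector field $\partial_t$ generating the flow $\Phi^t(n,s) = (n,s+t)$ is precisely the fundamental vector field of the $\RR$-action with $\omega(\partial_t) = \de t(\partial_t) = 1$, so it corresponds to the vector field $X$ under our normalisation $\omega(X)=1$ from Section \ref{STheGInvariantConnectionOnMtoN}; since $X$ is $G$-invariant (it is the canonical invariant vector field constructed from the isotropy representation), so is $\partial_t$. Here I would simply cite the earlier Corollary \ref{CGinvariantConnection} and the uniqueness of $X$ up to the normalisation.

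For part (b), let $\mu$ be any $G$-invariant metric. By Proposition \ref{PRepresentationSO(2)} applied to the isotropy representation, the line $\RR X_m = \mathcal D_m$ and its orthogonal complement $W_m$ are the two canonical $\SOg(2)$-invariant subspaces, and $W_m = (\RR X_m)^\perp$ for \emph{every} invariant metric; but $W_m$ is exactly $\ker\omega_m = \Hc_m$, so $\ker\de t \perp_\mu \partial_t$ at every point. Finally $\de t = \omega$ is $G$-invariant because it was shown in Section \ref{STheGInvariantConnectionOnMtoN} that $\omega$ is $G$-invariant (being dual to the $G$-invariant field $X$ with respect to the $G$-invariant metric, suitably normalised), and $G$-invariance is preserved under the equivariant trivialisation.

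The only genuine subtlety — the step I expect to be the main obstacle — is justifying that the flat bundle over the simply connected surface $N$ is \emph{globally} trivial as a bundle \emph{with connection}, i.e. producing the leaf $L_0$ and verifying that $\pi|_{L_0}$ is a diffeomorphism onto $N$ rather than merely a covering. This rests on the monodromy/holonomy theorem for flat bundles together with simple connectivity of $N$; everything else is bookkeeping with the already-established $G$-invariance of $\omega$ and $X$.
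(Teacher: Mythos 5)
Your proposal is correct and supplies exactly the standard argument that the paper leaves implicit: the paper states this corollary without proof, asserting only that it follows from the flatness of $\omega$ together with the simple connectivity of $M$ and $N$, and your holonomy/leaf construction is the intended way to fill that in. Parts (a) and (b) are handled correctly by citing Proposition \ref{PRepresentationSO(2)} (orthogonality of $L$ and $W$ for \emph{every} invariant metric) and the $G$-invariance of $\omega$ from Corollary \ref{CGinvariantConnection}.

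The one step where your justification is garbled is the exclusion of $H=\SOg(2)$. You write ``the flat bundle is trivial, hence $H=\RR$,'' but triviality of the bundle alone does not force the fibre group to be $\RR$; the correct argument is that a flat principal $\SOg(2)$-bundle over the simply connected surface $N$ would be the trivial bundle $N\times S^1$, whose total space has fundamental group containing $\ZZ$, contradicting the simple connectivity of $M$. (Equivalently: the long exact homotopy sequence of $H\to M\to N$ forces $H$ to be simply connected once the bundle is trivial.) This is a one-line repair and does not affect the rest of your argument, but as stated the sentence ``a nontrivial flat $S^1$-bundle over simply connected $N$ cannot occur'' addresses the wrong obstruction --- the problem with the circle case is the \emph{trivial} bundle, not a nontrivial one.
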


\begin{prop}\label{PExtensionInFlatCase} 
There is a Lie group epimorphism $\Pi \colon G \to \RR$ such that $G':=\ker\Pi$ acts on $N\times 0$ effectively, transitively and with isotropy group $K$.
\end{prop}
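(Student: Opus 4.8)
The plan is to build the epimorphism $\Pi \colon G \to \RR$ from the $G$--action on the $\RR$--coordinate of $M \cong N \times \RR$ (Corollary \ref{CTrivialBundleIfFlat}), and then identify $G' := \ker\Pi$ as the subgroup preserving the slice $N \times 0$. First I would exploit part (b) of Corollary \ref{CTrivialBundleIfFlat}: since $\de t$ is $G$--invariant, for each $g \in G$ the pullback $g^*(\de t) = \de t$, so $g$ maps the fibers $\{n\} \times \RR$ to fibers and commutes appropriately with the flow $\Phi$. More precisely, writing $g.(n,s) = (g_1(n,s), g_2(n,s))$, $G$--invariance of $\de t$ forces $\partial g_2/\partial s \equiv 1$, whence $g_2(n,s) = s + c(g,n)$ for a smooth function $c(g, \cdot)$ on $N$. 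The key observation is that $c(g,n)$ is in fact \emph{independent of $n$}: because the horizontal distribution $\Hc = \ker \de t$ is $G$--invariant (it is the unique $G$--invariant complement, again by Corollary \ref{CTrivialBundleIfFlat}(a)--(b)) and flat, $g$ carries the horizontal slice $N \times 0$ to a horizontal section of the trivial bundle, which — $N$ being connected — must be a single slice $N \times \{c(g)\}$. Thus $g.(n,s) = (\bar g(n), s + c(g))$ for a well-defined constant $c(g) \in \RR$ and an induced diffeomorphism $\bar g$ of $N$.

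Next I would check that $\Pi := c \colon G \to \RR$ is a Lie group homomorphism: from $(g_1 g_2).(n,s) = g_1.(g_2.(n,s))$ one reads off $c(g_1 g_2) = c(g_1) + c(g_2)$, and smoothness of $c$ follows from smoothness of the action together with the slice description (evaluate the action at any fixed point of $N \times 0$ and compose with $\pr_2$). Surjectivity of $\Pi$ is immediate because the flow $\Phi^t$ of the $G$--invariant field $\partial_t$ lies in $G$ (it commutes with $G$ and is complete, cf. Remark \ref{RCommutingActions}) and satisfies $c(\Phi^t) = t$. Hence $G' = \ker \Pi$ is a closed normal Lie subgroup consisting exactly of those $g$ with $c(g) = 0$, i.e. the $g$ preserving the slice $N \times 0$; such a $g$ acts on $N \times 0$ as $\bar g$.

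It remains to verify that $G'$ acts on $N \times 0$ \emph{transitively}, \emph{effectively}, and with isotropy $K$. Transitivity: given $n \in N$, pick $g \in G$ with $g.(n_0, 0) = (n, s)$ for some $s$; then $\Phi^{-s} g \in G'$ sends $(n_0,0)$ to $(n,0)$. Effectiveness: if $g \in G'$ fixes every point of $N \times 0$ then $\bar g = \id_N$ and $c(g) = 0$, so $g.(n,s) = (n,s)$ for all $(n,s)$, i.e. $g = \id$ in $\Diff(M)$ since $G$ acts effectively on $M$. Isotropy: the stabilizer in $G'$ of the basepoint $m_0 = (n_0, 0)$ is $\{g \in G : g.m_0 = m_0,\ c(g) = 0\}$; but $g.m_0 = m_0$ already forces $s$--coordinate $0$, i.e. $c(g) = 0$ automatically, so this stabilizer is precisely the full $G$--isotropy $K$ at $m_0$.

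The main obstacle I anticipate is the step showing $c(g,n)$ does not depend on $n$ — equivalently, that a $G$--element cannot "shear" the trivialization. This needs the uniqueness and flatness of the $G$--invariant connection (a horizontal global section of a flat trivial $\RR$--bundle over a connected base is a constant slice), so the argument hinges on carefully invoking Corollary \ref{CTrivialBundleIfFlat} rather than on any new computation. Everything else is bookkeeping with the semidirect-type formula $g.(n,s) = (\bar g(n), s + c(g))$.
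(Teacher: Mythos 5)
Your construction of $\Pi$ is essentially the paper's: you read off the $t$--shift of $g$, use the $G$--invariance of the trivial connection to see that this shift is independent of the point of $N$, and the homomorphism property, smoothness, effectiveness and the identification of the isotropy group all go through as you describe. However, there is a genuine error in the two places where you invoke ``$\Phi^t \in G$,'' namely surjectivity of $\Pi$ and transitivity of $G'$ on $N\times 0$. The flow of the $G$--invariant field $\partial_t$ commutes with $G$ (Remark \ref{RCommutingActions}), but commuting with $G$ does not place it inside $G$, and in general it is \emph{not} inside $G$. Indeed, in Example \ref{EFlatGeometry} with $\kappa\neq 0$ --- precisely the new geometry (d) of Theorem \ref{TClassificationFlatGeometries} that this proposition is meant to detect --- the flow is $\Phi^t(x,s)=(x,s+t)$, while any element of $G_\kappa$ shifting the fiber coordinate by $t$ acts on the $\RR^2$--factor by $x\mapsto e^{-\kappa t/2}Ax+a$ with $A\in\SOg(2)$, and no such map is the identity on $\RR^2$ unless $t=0$. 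The statement $\Phi^t\in G$ is exactly what the paper derives \emph{under the additional hypothesis of compact quotients} in Remark \ref{RCompareToThurston}, where it produces the splitting $G\cong G'\times\RR$; assuming it here would make case (d) impossible, so your argument proves too much.

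The repair is immediate and is what the paper does: for surjectivity, given $s\in\RR$ use transitivity of $G$ on $M$ to choose $g$ with $g.(n,0)=(n,s)$, so that $\Pi(g)=s$; for transitivity of $G'$, given $n\in N$ choose $g$ with $g.(n_0,0)=(n,0)$ directly, so that $\Pi(g)=0$ and $g\in G'$ already --- no correction by the flow is needed. With these two substitutions your argument coincides with the paper's proof.
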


\begin{proof}
We define $\Pi \colon G \to \RR$ similar to the homomorphism $\Psi$ in Proposition \ref{PGCommutesWithIsotropyGroup}. Let $t \colon N \times \RR \to \RR$ be the projection onto the second factor. The map $g_t \colon N \times \RR \to \RR$, $g_t:=t\circ g$ for $g \in G$ does not depend on the $N$--factor since $g$ respects the trivial connection. Hence the map $\Pi \colon G \to \RR$, $\Pi(g):=g_t(n,0)$ is well--defined where $n \in N$ is arbitrary. We claim that $\Pi$ is a Lie group homomorphism. If $g_1,g_2 \in G$ we set $s_i:=(g_i)_t(n,0)$ and $n_2:=\pr_1\circ g_2(n,0)$ (note that $s_i$ does not depend on the choice of $n \in N$). We obtain
\[
(g_1g_2)(n,0)=g_1(n_2,s_2)=\Phi^{s_2}(g_1(n_2,0))
\]
since $\Phi$ and elements of $G$ commute. Hence 
\[
 \Pi(g_1g_2)=s_1+s_2=\Pi(g_1)+\Pi(g_2).
\]
Moreover $\Pi$ is continuous since it is the evaluation map in $(n,0)$ and therefore smooth. For $s \in \RR$ choose an $n \in N$ and a $g \in G$ such that $g(n,0)=(n,s)$. We have $\Pi(g)=s$ and this shows that $\Pi$ is indeed onto.
It remains to study the kernel.

Clearly $G':=\ker\Pi$ acts on $N\times 0$ transitively. Suppose $g \in G'$ and $g(n,0)=(n,0)$ for all $n \in N$. It follows for all $(n,t) \in N \times\RR$
\[
 g(n,t)=g\circ \Phi^t(n,0)=\Phi^t\circ g(n,0)=(n,t)
\]
which implies $g=\id$. Let $K$ be the isotropy group in $(n,0)$ of $G$ acting on $N \times \RR$. Then $K$ is a subgroup of $G'$ and hence $K$ is the stabilizer of $G'$ acting on $N\times 0$ in that point.
\end{proof}

\begin{rem}
From Proposition \ref{PExtensionInFlatCase} we know that $N$ is the homogeneous space $G'/K$, where $G'$ is defined as above and $K$ is an isotropy of $G$ in a point on $N \times 0$. And since $\Pi$ is onto, $G'$ is of dimension $3$ and connected which follows from the long exact homotopy sequence.
The spaces $G'/K$ admit a homogeneous metric which has constant Gaussian curvature, hence they are two--dimensional space forms. Thus if we identify $N$ with $N \times 0$ the pair $(N,G')$ is equivariant diffeomorphic to exactly one of the following three pairs
\[
 (\RR^2,\Eg_0(2)),\; (S^2,\SOg(3)),\; (D^2, \SOg^+(2,1)).
\]
\end{rem}
\bigskip
One important step to analyze the short exact sequence 
\[
1\longrightarrow G'\longrightarrow G\longrightarrow \RR\longrightarrow 1
\]
from Proposition \ref{PExtensionInFlatCase} is to determine the $G$--invariant metrics on $M$. For that purpose we would like to introduce some notations. Suppose $(N \times \RR,G)$ is a geometry like in Corollary \ref{CTrivialBundleIfFlat} and let $T(N\times \RR)=\mathcal H \oplus \mathcal V$ where $\mathcal H=\ker(\pr_2)_*$ is the trivial connection and $\mathcal V$ is the line bundle spanned by $\partial_t$. Let $\e_\kappa$ be the pullback on $N \times \RR$ of the function $\RR \to \RR$, $t\mapsto e^{\kappa t}$, where $\kappa=\div\partial_t$ with respect to any $G$-invariant metric, see Proposition \ref{PDivergenceOfX}.

\begin{prop}\label{PGInvariantMetricsFlatCase}
Let $(N\times \RR, G)$ be a flat geometry like in Corollary \ref{CTrivialBundleIfFlat} and let $T(N\times \RR)=\mathcal H \oplus \mathcal V$ as described above. If $\mu$ is a $G$--invariant metric on $N \times \RR$ then
\[
\mu=\e_\kappa\nu + \lambda\,\de t^2
\]
where $\nu$ is the pullback by $\pr_1$ of a metric of constant Gaussian curvature on $N$ and $\lambda=\mu(\partial_t,\partial_t)$ and $\kappa = \div\partial_t$.

\end{prop}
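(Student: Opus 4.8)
The strategy is to use the $G$-invariance of $\mu$ to reduce the whole question to linear algebra at the single point $p_0=(n_0,0)$, exactly as in the proof of Proposition~\ref{PDivergenceOfX}. Write $T(N\times\RR)_{p_0}=\Hc_{p_0}\oplus\Vc_{p_0}=W\oplus L$, which by Corollary~\ref{CTrivialBundleIfFlat}(b) is the $K$-invariant orthogonal decomposition of Proposition~\ref{PRepresentationSO(2)}. Since $\mu_{p_0}$ is a $K$-invariant scalar product, Proposition~\ref{PKInvariantEndos} (applied to the endomorphism associated to $\mu_{p_0}$ relative to a fixed background $K$-invariant metric) forces $\mu_{p_0}$ to split as an orthogonal sum of a scalar product on $W$ and one on $L$; in particular $\Hc$ and $\Vc$ are $\mu$-orthogonal everywhere by $G$-invariance, and $\mu(\partial_t,\partial_t)\equiv\lambda$ is constant (this is the length statement in the Remark after the first Corollary, which also follows from Proposition~\ref{PDivergenceOfX}). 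So we may already write $\mu=\mu^{\Hc}+\lambda\,\de t^2$ where $\mu^{\Hc}$ is a $G$-invariant fibre metric on the horizontal bundle $\Hc=\ker\de t$.

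The next step is to identify $\mu^{\Hc}$. Because the bundle and connection are trivial, $\Hc$ is canonically the pullback $\pr_1^*TN$, so a fibre metric on $\Hc$ is the same as a $t$-dependent family $\nu_t$ of metrics on $N$; write $\mu^{\Hc}=\nu_t$ (pulled back). I would now exploit two group actions. First, the subgroup $G'=\ker\Pi$ of Proposition~\ref{PExtensionInFlatCase} acts on each slice $N\times\{t\}$ transitively with compact isotropy, and this action is by the same formula for every $t$ (it commutes with the flow $\Phi$ and preserves $\Hc=\pr_1^*TN$, hence descends to the single action of $G'$ on $N$ via $\pr_1$). Therefore each $\nu_t$ is a $G'$-invariant metric on the surface $N=G'/K$, which by the Remark after Proposition~\ref{PExtensionInFlatCase} is one of $\RR^2,S^2,D^2$ with its space-form group; so $\nu_t$ is a positive multiple $c(t)\,\nu$ of a fixed metric $\nu$ of constant Gaussian curvature (a $G'$-invariant metric on a $2$-dimensional space form is unique up to scale). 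Thus $\mu=c(t)\,\nu+\lambda\,\de t^2$ with $c\colon\RR\to\RR_{>0}$ a smooth positive function.

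It remains to show $c(t)=c(0)\e_\kappa$, i.e. $c(t)=c(0)e^{\kappa t}$ with $\kappa=\div\partial_t$. Here I would use the full group $G$: pick $g\in G$ with $\Pi(g)=s$, so $g$ acts on $N\times\RR$ covering the translation $\Phi^s$ on the $\RR$-factor and, since it preserves the trivial connection, $g^*\partial_t=\partial_t$. Invariance $g^*\mu=\mu$ compares the metric on the slice at level $t+s$ with that at level $t$ (transported by the $N$-part of $g$, which is a diffeomorphism of $N$ but not necessarily an isometry of $\nu$); combined with the $G'$-reduction already established this yields a multiplicative cocycle relation $c(t+s)=f(s)c(t)$ for all $s,t$, whose only smooth positive solutions are $c(t)=c(0)e^{\kappa t}$ for some constant $\kappa\in\RR$. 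Finally one computes $\div\partial_t$ for the metric $c(0)e^{\kappa t}\nu+\lambda\,\de t^2$: the volume form is $c(0)e^{\kappa t}\sqrt{\lambda}\,(\text{area form of }\nu)\wedge\de t$ up to a constant, and $\Lc_{\partial_t}$ of it equals $\kappa$ times it, so $\div\partial_t=\kappa$, matching the definition of $\e_\kappa$; absorbing $c(0)$ into $\nu$ gives exactly $\mu=\e_\kappa\nu+\lambda\,\de t^2$.

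The main obstacle is the middle step — proving that the horizontal part reduces, for each fixed $t$ independently, to a $G'$-invariant (hence constant-curvature, hence up-to-scale unique) metric on $N$, and more precisely that the identification of $\Hc$ with $\pr_1^*TN$ is compatible with the $G'$-action so that "$G$-invariant fibre metric" really does restrict slicewise to "$G'$-invariant metric on $N$". Once that functoriality is pinned down, the cocycle argument for the exponential profile and the divergence computation are routine.
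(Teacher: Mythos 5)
Your argument is correct, but it reaches the crucial step --- the $t$-dependence of the horizontal part of $\mu$ --- by a genuinely different mechanism than the paper. The paper fixes a point $(n,s)$, chooses a smooth curve $t\mapsto g_t$ in $G$ with $g_t\circ\Phi^t(n,s)=(n,s)$, notes that the differential $f_t$ of $g_t\circ\Phi^t$ at $(n,s)$ is $K$-equivariant and hence by Proposition \ref{PKInvariantEndos} acts on $\Hc_{(n,s)}$ as $r_tk_t$ with $k_t\in K$, and then turns the identity $\mathcal L_X\mu=\kappa\,\mu$ on horizontal vectors (from the proof of Proposition \ref{PDivergenceOfX}) into an ODE for $r_t$, so that the exponential profile and the identification of the exponent with $\div\partial_t$ appear simultaneously; the constant-curvature statement is obtained only on the zero slice and propagated by the flow. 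You instead first show that \emph{every} slice metric $\nu_t$ is $G'$-invariant, hence by Schur (irreducibility of $W$ under $K\cong\SOg(2)$) a scalar multiple $c(t)\nu$ of one fixed constant-curvature metric, then extract the multiplicative relation $c(t+s)=f(s)\,c(t)$ from an arbitrary $g\in G$ with $\Pi(g)=s$ (using that $\bar g^*\nu=\tfrac{c(t)}{c(t+s)}\nu$ has $t$-independent left-hand side), solve it to get $c(t)=c(0)e^{\kappa' t}$, and only afterwards identify $\kappa'=\div\partial_t$ by computing $\mathcal L_{\partial_t}v_\mu$. Your route avoids choosing the correcting curve $g_t$ (which needs a local section of $G\to M$) and the Lie-derivative ODE, at the cost of the uniqueness-up-to-scale input and a separate divergence computation; the ``main obstacle'' you flag is real but straightforward, since elements of $G'$ commute with $\Phi$ and preserve $\ker\de t$, so they act on every slice by one and the same diffeomorphism of $N$ and the identification $\Hc\cong\pr_1^*TN$ via $\De\pr_1$ is $G'$-equivariant. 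Both proofs are complete and yield the same normal form.
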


\begin{proof}
By Corollary \ref{CTrivialBundleIfFlat} $\mathcal H$ and $\mathcal V$ are orthogonal for any $G$--invariant metric, hence $(\Phi^t)^*(\mu)$ must have the form $\nu^*+\lambda \de t^2$
where $\nu^*$ is a symmetric bilinear form such that $\nu^*(\partial_t,\xi)=0$ for all $\xi \in T(N\times \RR)$ and $\lambda=\mu(\partial_t,\partial_t)$ (note that $\de t(\partial_t)=1$). Fix a point $(n,s) \in N\times \RR$ and let $t\mapsto g_t$ be a smooth curve in $G$ such that $g_t\circ \Phi^t(n,s)=(n,s)$ for all $t$ and $g_0 =\id$. Denote by $f_t$ the automorphism of $T(N\times\RR)_{(n,s)}$ which is given as the derivative of $g_t\circ\Phi^t$ in $(n,s)$. We obtain
\begin{equation}\label{G1}
  (\Phi^t)^*(\mu)_{(n,s)}=(f_t)^*(\mu_{(n,s)})
\end{equation}
for all $t$. Using Corollary \ref{CElementsOfKFCommuteWithK} and Remark \ref{RCommutingActions} we see that $f_t$ commutes with all elements of $K$. And moreover, using Proposition \ref{PKInvariantEndos} we have $f_t|\mathcal H_{(n,s)}=r_tk_t|\mathcal H_{(n,s)}$ with $r_t>0$ and $k_t \in K$. Taking the derivative of equation (\ref{G1}) we obtain for $\zeta_1,\zeta_2 \in \mathcal H_{(n,s)}$
\[
 (\Phi^t)^*(\mathcal L_x\mu)(\zeta_1,\zeta_2)=2r_t\dot r_t\mu(\zeta_1,\zeta_2)
\]
where the dot denotes the $t$--derivative of $t\mapsto r_t$. From the proof of Proposition \ref{PDivergenceOfX} we know that $\mathcal L_X\mu(\zeta_1,\zeta_2)=\kappa\mu(\zeta_1,\zeta_2)$ and so we end up to solve the equation
\[
 \kappa r_t = \tfrac{1}{2}\dot r_t
\]
such that $r_0=1$, hence $r_t=e^{\frac{1}{2}\kappa t}$. 

Moreover we have $(\De\Phi^t)_{(n,s)} \colon TN_n\oplus \RR(\partial_t)_{(n,s)}\to  TN_n\oplus \RR(\partial_t)_{(n,s+t)}$
\[
 (\De\Phi^t)_{(n,s)}(\zeta,(\partial_t)_{(n,s)})=(\zeta,(\partial_t)_{(n,s+t)})
\]
and therefore for $\zeta_1,\zeta_2 \in \mathcal H_{(n,s)}$
\[
 \mu_{(n,s)}(\zeta_1,\zeta_2)=(\Phi^s)^*(\mu)_{(n,0)}(\zeta_1,\zeta_2)=e^{\kappa s}\mu_{(n,0)}(\zeta_1,\zeta_2).
\]
But $\mu$ restricted to $N \times 0$ is invariant under $G'$, cf. Proposition \ref{PExtensionInFlatCase}. Hence it is a metric of constant curvature on $N$. Let $\nu$ be its pullback under $\pr_1$. We obtain
\[
 \mu_{(n,s)}(\zeta_1,\zeta_2)=e^{\kappa s}\mu_{(n,0)}(\zeta_1,\zeta_2)=e^{\kappa s}\nu_{(n,s)}(\zeta_1,\zeta_2).
\]
\end{proof}

Before we state the classification for geometries with flat canonical connections we would like to introduce a geometry, which is less known and which is an axially symmetric one with flat connection.

\begin{rem}\label{RCompareToThurston}
If we assume that $G$ admits cocompact discrete subgroups then with Proposition \ref{PDivergenceOfX} $t \mapsto \Phi^t$ is a one-parameter group of isometries for any $G$-invariant metric on $M$. Moreover, $\Phi^t \in G$ for all $t \in \RR$ since $\Phi^t$ maps $\partial_t$ to $\partial_t$. By Corollary \ref{CTrivialBundleIfFlat} the flow is just a translation in the fiber, hence $t\mapsto \Phi^t$ is a section for $\Pi \colon G \to \RR$. It follows that $G$ is isomorphic to $G'\times \RR$ since $\Phi^t$ lies in the center of $G$. The resulting geometries are the one listed in \cite[p. 183]{Thurston1982}.
\end{rem}

\begin{exmpl}\label{EFlatGeometry}
Let $\k \in \RR$ and set $G_\k:=\Eg_0(2)\rtimes_{\rho_\k} \RR$, where $\rho_\kappa \colon \RR \to \Aut_0(\Eg_0(2))$ is given as $(\rho_\k)_s(a,A):=(e^{-\frac{1}{2}\kappa s}a, A)$.  For the sake of convenience we omit the subscript $\kappa$ for $\rho_\k$ and $G_\k$ but nevertheless we keep in mind that those objects depend on that number. Consider the manifold $M=\RR^2\times \RR$ where we denote by $(x,t)$ a point in $M$ such that $x \in \RR^2$ and $t \in \RR$. The group $G$ acts on $M$ by
\[
 (a,A,s).(x,t):=\left( e^{-\frac{1}{2}\kappa s} Ax +a,t+s\right).
\]
It is easy to see that $(M,G)$ is an axially symmetric geometry and the $G$--invariant vector field $\partial_t$ is obtained as the vector field to the flow $\Phi^t(x,s)=(x,s+t)$. The metric
\[
 \mu_{(x,t)}=
 \left(
 \begin{matrix}
 e^{\k t}\cdot E_2 & 0 \\
 0 & 1 \\
 \end{matrix}
 \right)
\]
is invariant under the action of $G$. The Riemannian volume form $v_\mu$ with respect to the standard orientation $\de x\wedge \de t$ is $e^{\k t} \de x \wedge \de t$ and it follows that 
\[
 \mathcal L_{\partial_t}v_\mu=\kappa v_\mu,
\]
hence $\div\partial_t =\k$. Moreover the connection form is $\de t$ which is a flat connection on the trivial $\RR$--principal bundle $\RR^2 \times \RR \to \RR^2$.

If $\kappa \neq 0$ then $(M,G_\k)$ is equivariant diffeomorphic to $(M,G_1)$. The map $f \colon M \to M$, $f(x,t)=(x,\k t)$ is an $F$--equivariant diffeomorphism, where $F \colon G_\k \to G_1$, $F(a,A,s)=(a,A,\k s)$. The geometries $(M,G_1)$ and $(M,G_0)$ cannot be isomorphic since there is no Lie group isomorphism between $G_1$ and $G_0$ (note that $G_1$ has trivial center and $G_0$ not). 

Clearly $G_0$ is a subgroup of $\Eg(3)$. The group $G_1$ is a subgroup of the isometry group of hyperbolic $3$--space, $\SOg^+(3,1)$. To see this consider the upper half--space $H=\RR^2 \times \RR_{>0}$ and the diffeomorphism $F \colon H \to M$, $(x,s)\mapsto (x,-\ln s)$. The group $G_1$ acts on $H$ via $F$ and the metric $F^*(\mu)$ is invariant under this action. But $F^*(\mu)$ is the standard hyperbolic metric on $H$ of constant curvature equal to $-1$, hence $G_1 \subset \SOg^+(3,1)$.
\end{exmpl}

\begin{thm}\label{TClassificationFlatGeometries}
Let $(M,G)$ be an axially symmetric geometry and $\pi \colon M \to N$ the induced principal bundle with connection form $\omega$. Suppose $\omega$ is flat. Then $(M,G)$ is equivariant diffeomorphic to exactly one of the following geometries
\begin{enumerate}
 \item[(a)] $(S^2 \times \RR, \SOg(3) \times \RR)$
 \item[(b)] $(D^2\times \RR, \SOg^+(2,1) \times \RR)$
 \item[(c)] $(\RR^2 \times \RR, \Eg_0(2) \times \RR)$
 \item[(d)] $(\RR^2 \times \RR, \Eg_0(2) \rtimes_{\rho_1} \RR)$ \hfill [cf. Example \ref{EFlatGeometry}]
\end{enumerate}

\end{thm}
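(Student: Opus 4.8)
The plan is to combine the structural results already established: by Corollary \ref{CTrivialBundleIfFlat} we may assume $M = N \times \RR$ with $N$ one of $\RR^2, S^2, D^2$, by Proposition \ref{PExtensionInFlatCase} there is an exact sequence $1 \to G' \to G \to \RR \to 1$ with $G' \in \{\Eg_0(2), \SOg(3), \SOg^+(2,1)\}$ acting on $N \times 0$ in the standard way, and the extension is classified by the conjugation action $\rho \colon \RR \to \Out(G')$ together with a cohomology class measuring whether the sequence splits. The first step is to pin down $\rho$. Since $\Aut(\SOg(3))$ and $\Aut(\SOg^+(2,1))$ are (essentially) inner and $\RR$ is connected, $\rho$ is trivial in the $S^2$ and $D^2$ cases. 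For $G' = \Eg_0(2)$ the outer automorphism group is larger, but the constraint that $G$ still acts on $M = N \times \RR$ respecting the flat connection and the $G$--invariant vector field $\partial_t$ forces the action on the $\RR^2$--factor to be by a similarity: using Proposition \ref{PGInvariantMetricsFlatCase}, conjugation by a lift of $s \in \RR$ rescales the fiber metric by $e^{\kappa s}$ on $\mathcal H$, hence acts on $\RR^2$ by the scalar $e^{-\frac12 \kappa s}$ composed with a rotation, i.e. $\rho$ is exactly $\rho_\kappa$ from Example \ref{EFlatGeometry} (and an outer automorphism, but the split extension absorbs any rotational part).

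The second step is to determine whether the sequence splits and, if so, whether the splitting is central. In the $\SOg(3)$ and $\SOg^+(2,1)$ cases the flow $\Phi^t$ maps $\partial_t$ to itself, lies in the centralizer of $G'$, and by Corollary \ref{CTrivialBundleIfFlat}(a) it is $G$--invariant; one checks $\Phi^t \in G$ by noting it is determined by its effect on $N \times 0$ (trivial) and on the fiber (translation), and that $G$ contains such elements because it acts transitively on fibers with isotropy $K$ acting trivially on $\mathcal V$. This gives a central section, so $G \cong G' \times \RR$, yielding (a) and (b). For $G' = \Eg_0(2)$ with $\kappa = 0$ the same argument gives $G \cong \Eg_0(2) \times \RR$, which is (c). For $\kappa \neq 0$ the flow is no longer an isometry but $\rho_\kappa$ is still nontrivial; here I would construct the splitting directly by exhibiting the subgroup of $G$ consisting of the maps $(x,t) \mapsto (e^{-\frac12 \kappa s} x, t + s)$, which is a copy of $\RR$ complementary to $G'$, giving $G \cong \Eg_0(2) \rtimes_{\rho_\kappa} \RR = G_\kappa$; by Example \ref{EFlatGeometry} all these with $\kappa \neq 0$ are mutually equivariant diffeomorphic to the $\kappa = 1$ case, which is (d).

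The last step is to verify that the four geometries are pairwise non--isomorphic and that no further identifications occur. The underlying manifold is $S^2 \times \RR$ for (a) and $\RR^3$ for (b), (c), (d), so (a) is distinguished topologically. Among (b), (c), (d) the groups $G'$ are pairwise non--isomorphic Lie algebras (compact semisimple, solvable, split semisimple respectively), and $G'$ is recovered from $G$ intrinsically (e.g. as $\ker\Pi$, or via the structure of $\fg$), so (b), (c), (d) are mutually distinct provided $\Eg_0(2) \times \RR \not\cong \Eg_0(2) \rtimes_{\rho_1} \RR$; this is exactly the center computation in Example \ref{EFlatGeometry} — the former has $2$--dimensional center, the latter $1$--dimensional — so they are not isomorphic as Lie groups.

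The main obstacle will be the $G' = \Eg_0(2)$ case: showing that the abstract extension is forced to be precisely $\Eg_0(2) \rtimes_{\rho_\kappa} \RR$ for some $\kappa$ and nothing more exotic. One must rule out extensions where the $\RR$ acts on $\Eg_0(2)$ by a genuine outer automorphism not of the form $\rho_\kappa$, and this is where the geometric input is essential rather than the pure group cohomology: the requirement that $G$ act on $M = \RR^2 \times \RR$ preserving the product structure, the flat connection $\de t$, and the vector field $\partial_t$ (together with the metric normalization of Proposition \ref{PGInvariantMetricsFlatCase}) is what rigidifies the automorphism to a similarity of $\RR^2$. Care is also needed with the parameter $\kappa$: a priori it could take any real value, but the rescaling $f(x,t) = (x, \kappa t)$ of Example \ref{EFlatGeometry} reduces $\kappa \neq 0$ to $\kappa = 1$, and one must confirm that the sign of $\kappa$ does not produce an inequivalent geometry — reversing $t$ handles $\kappa < 0$.
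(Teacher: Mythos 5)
Your overall architecture coincides with the paper's: trivialise the bundle via Corollary \ref{CTrivialBundleIfFlat}, pass to the extension $1\to G'\to G\to\RR\to 1$ of Proposition \ref{PExtensionInFlatCase}, split it, identify the twisting homomorphism (using Proposition \ref{PGInvariantMetricsFlatCase} to rigidify the $\Eg_0(2)$ case to $\rho_\kappa$), normalise $\kappa\neq 0$ to $\kappa=1$, and separate the four outcomes. The one step that does not go through as written is your justification of $\Phi^t\in G$ in the $S^2$ and $D^2$ cases. Transitivity only produces, for each $t$, some $g\in G$ with $g.(n_0,0)=(n_0,t)$; such a $g$ translates one fibre by $t$ but need not act trivially on the $N$--factor, so it need not equal $\Phi^t$ (in Example \ref{EFlatGeometry} the elements $(0,I,s)$ of $G_1$ are exactly of this kind and are not $\Phi^s$). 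To close the gap you need two further inputs: first, that $\kappa=\div\partial_t=0$ whenever $N$ has non-zero constant curvature --- any $g$ with $\Pi(g)=s$ carries the leaf $(N\times 0,\nu)$ isometrically onto $(N\times s,e^{\kappa s}\nu)$ by Proposition \ref{PGInvariantMetricsFlatCase}, and a complete surface of constant curvature $c\neq 0$ admits no self-similarity of ratio $\neq 1$; second, that the isometry of $(N,\nu)$ induced by $g$ lies in the image of $G'$ (connectedness of $G$), so that after multiplying $g$ by a suitable element of $G'$ one obtains an element inducing the identity on $N$ with $\Pi=t$, which is then forced to be $\Phi^t$. The paper avoids this entirely: it splits the sequence abstractly (the base $\RR$ is simply connected) and uses that $\SOg(3)$ and $\SOg^+(2,1)$ are complete, so the twisting is by inner automorphisms and the semidirect product is already a direct product; no claim about $\Phi^t$ is needed for (a) and (b).

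Two smaller corrections. Your center count is off by one: $Z(\Eg_0(2)\times\RR)$ is the $\RR$--factor (one--dimensional, since $\Eg_0(2)$ is centerless) and $Z(\Eg_0(2)\rtimes_{\rho_1}\RR)$ is trivial; the conclusion that (c) and (d) are not isomorphic stands and is exactly the argument of Example \ref{EFlatGeometry}. Likewise the parenthetical ``compact semisimple, solvable, split semisimple'' mislabels the groups $G'$ occurring in (b), (c), (d) --- only $\SOg^+(2,1)$ and $\Eg_0(2)$ appear there --- but the intended mechanism (recover $G'$ intrinsically as $\ker\Pi$ and compare) is sound, and your explicit verification that the four geometries are pairwise non-isomorphic is actually more complete than what the paper writes down.
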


\begin{proof}
From Corollary \ref{CTrivialBundleIfFlat} we know that $(M,G)$ is isomorphic to $(N \times \RR, G)$ and the principal bundle $\pi \colon M\to N$ is the trivial $\RR$--bundle $N \times \RR \to N$. From Proposition \ref{PExtensionInFlatCase} we have an exact sequence 
\[
 1 \lr G' \lr G \lr \RR \lr 1.
\]
such that $G'/K=N$. This sequence always splits as $\RR$ is simply connected. This implies that $G$ is isomorphic to the semidirect product $G' \rtimes_\rho \RR$, for a Lie group homomorphism $\rho \colon \RR \to \Aut_0(G')$. The isotropy group $K$ of $G$ acting on $N \times \RR$ lies in $G'\rtimes \RR$ as $K \times 0$. For $G'$ we have the possibilities $\SOg(3)$, $\SOg^+(2,1)$ and $\Eg_0(2)$.

Suppose first $G'=\SOg(3)$ thus $N =S^2$. The group $\SOg(3)$ is complete (i.e. $\SOg(3)$ is centerless and the autormorphism group is equal to the inner automorphism group)  and therefore $\rho$ is the trivial action on $G'$ which means that $G$ is $\SOg(3)\times \RR$. Since the isotropy group is $K \times 0$ we have that $(M,G)$ is isomorphic to $(S^2\times \RR, \SOg(3) \times \RR)$ with the standard action. If $G' =\SOg^+(2,1)$ then the connected component of its automorphism group is $\SOg^+(2,1)$ itself. Same arguments as for the case $G'=\SOg(3)$ apply since $\rho$ is continuous and thus the geometry is isomorphic to $(D^2 \times \RR, \SOg^+(2,1) \times \RR)$ again endowed with the standard action.

The remaining case is $G'=\Eg_0(2)$ and $N=\RR^2$. If $\s \colon \RR \to G$ is the splitting map above then $\rho_t$ is the conjungation with $\sigma(t)$ in $G'$, $\rho_t=c_{\sigma(t)}$ for all $t \in \RR$. Hence the group structure of $G=\Eg_0(2)\rtimes_\rho \RR$ depends on the splitting map. We choose a $G$--invariant metric $\mu$ such that $\partial_t$ has length equal to $1$. With Proposition \ref{PGInvariantMetricsFlatCase} $\mu$ has the form
\[
 \mu=\e_\k\delta+\de t^2
\]
where $\delta$ is the pullback of a flat metric on $\RR^2$ to $\RR^2\times \RR$ and $\kappa = \div\,\partial_t$. For $t \in \RR$ consider the map 
$\sigma_t \colon \RR^2 \times \RR \to \RR^2 \times \RR$,
\[
 \s_t(x,s):=\left(e^{-\frac{\kappa }{2}t}x,s+t\right)
\]
which is an isometry for $\mu$ and furthermore $\sigma_t$ is contained in $G$ for all $t\in \RR$ (note that all isometries which maps $\partial_t$ to $\partial_t$ are elements of $G$). The map $\s \colon \RR \to G$, $t\mapsto \s_t$ is a Lie group homomorphism which is obviously a splitting map for the exact sequence above, since the last entry is a translation in the fiber. It follows that $\rho \colon \RR \to \Aut(\Eg_0(2))$ is given by the conjugation in the diffeomorphism group with $\s$
\[
 \rho_t(a,A)=\left( e^{-\frac{\kappa}{2}t}a, A\right)
\]
and therefore the action is given by
\[
 (a,A,t).(x,s)=\left(e^{-\frac{1}{2}\k t}Ax+a,t+s \right).
\]
This geometry was described in Example \ref{EFlatGeometry}.
\end{proof}

\subsection{Classification of geometries with non--flat connections}\label{SnonFlatConnection}

Henceforth we suppose $\de\omega\neq 0$. Using the $G$--invariance of $\de\omega$ we see that it is fully determined by $\delta:=\de\omega_{m_0}$ for $m_0 \in M$ and by assumption we have $\delta\neq 0$. From Proposition \ref{PKInvariantEndos} we deduce the

\begin{cor}\label{CCurvatureNonFlatCase}
With respect to the decomposition $TM_{m_0}=L\oplus W$ of Proposition \ref{PRepresentationSO(2)} $\delta$ has the form
\[
 \left(
 \begin{matrix}
 0 & 0 \\
 0 & v_2 \\
 \end{matrix}
 \right)
\]
where $v_2$ is a volume form on $W$.
\end{cor}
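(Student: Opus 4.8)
The idea is to turn the alternating $2$--form $\delta = \de\omega_{m_0}$, which takes values in $\mathfrak h \cong \RR$ and is therefore an ordinary alternating bilinear form on $V := TM_{m_0}$, into a $K$--equivariant endomorphism and to read off its shape from Proposition \ref{PKInvariantEndos}. Note first that $\delta$ is $K$--invariant because $\de\omega$ is $G$--invariant. Fix a $G$--invariant metric $\mu$ on $M$ and put $\sigma := \mu_{m_0}$; this is a $K$--invariant scalar product on $V$, and by Proposition \ref{PRepresentationSO(2)} the decomposition $V = L \oplus W$ is $\sigma$--orthogonal. Define $J \in \End(V)$ by $\sigma(Jv,w) = \delta(v,w)$ for all $v,w \in V$. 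Since $\delta$ is alternating, $J$ is skew--adjoint with respect to $\sigma$; since $\sigma$ and $\delta$ are both $K$--invariant, $J$ is $K$--equivariant, so $J \in \End_\rho(V)$.

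Applying Proposition \ref{PKInvariantEndos}, $J = \diag(\alpha,\, \lambda\, k|W)$ with respect to $V = L \oplus W$, where $\alpha \in \RR$, $\lambda \geq 0$ and $k \in \SOg(2)$; in particular $J$ preserves both $L$ and $W$. The $L$--block of $\delta$ then vanishes simply because $\dim L = 1$ (equivalently $\alpha = -\alpha = 0$ by skew--adjointness). The two mixed blocks vanish as well: for $l \in L$ and $w \in W$ one gets $\delta(l,w) = \sigma(Jl,w) = 0$ because $Jl \in L \perp W$, and symmetrically $\delta(w,l) = \sigma(Jw,l) = 0$ because $Jw \in W \perp L$. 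Hence $\delta$ is concentrated in the $W$--block, where $\delta(w,w') = \lambda\,\sigma(kw,w')$.

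It remains to see that $\delta|_W$ is a volume form on $W$. As an alternating bilinear form on the $2$--dimensional space $W$ it is automatically a scalar multiple of the area form of $\sigma|_W$ -- this also matches the fact that for $k \in \SOg(2)$ skew--adjointness of $\lambda\,k|W$ forces $k$ to be a quarter turn unless $\lambda = 0$. So the only thing left is to rule out $\lambda = 0$; but $\lambda = 0$ would give $J = 0$ and hence $\delta = 0$, contradicting the standing hypothesis $\de\omega \neq 0$ of this subsection. Therefore $\delta|_W =: v_2$ is a genuine volume form on $W$, and $\delta$ has the asserted matrix form.

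The argument is almost purely formal once Proposition \ref{PKInvariantEndos} is available; the only step that needs a little care is the final non--vanishing, which works precisely because the $L$--block and the mixed blocks carry nothing, so that the nonzero form $\delta$ must live entirely in the $2\times 2$ block on $W$.
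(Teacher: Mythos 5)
Your proof is correct and follows exactly the route the paper intends: the corollary is stated in the paper as a direct consequence of Proposition \ref{PKInvariantEndos}, and your argument (converting $\delta$ into a skew--adjoint $K$--equivariant endomorphism via a $K$--invariant scalar product, reading off the block form, and using the standing hypothesis $\de\omega\neq 0$ to get non--degeneracy on $W$) is precisely the deduction the paper leaves implicit. Nothing to change.
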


\begin{cor}\label{CXKillingField}
If $\de\omega\neq 0$ then $X$ is a Killing field for any $G$--invariant metric.
\end{cor}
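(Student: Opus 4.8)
The plan is to exploit the interplay between the curvature form $\de\omega$ and the Lie derivative $\Lc_X\mu$, both of which are $G$--invariant and hence determined by their values at $m_0$. By Proposition \ref{PDivergenceOfX} we already know that $X$ is a Killing field if and only if $\div X=0$, i.e.\ if and only if the constant $\lambda_2$ appearing in the matrix of the endomorphism $f$ (with $\mu_{m_0}(f(v),w)=(\Lc_X\mu)_{m_0}(v,w)$) vanishes. So the task reduces to showing $\lambda_2=0$ under the hypothesis $\de\omega\neq0$. The idea is that $\Lc_X(\de\omega)=\de(\iota_X\de\omega)+\iota_X\de\de\omega = \de(\iota_X\de\omega)$; but since $\omega$ is a connection form and $X$ the fundamental vector field normalized by $\omega(X)=1$, one has $\iota_X\de\omega=0$ (the curvature is horizontal, or directly: $\de\omega(X,\cdot)=X(\omega(\cdot))-(\cdot)(\omega(X))-\omega([X,\cdot])$, and each term vanishes because $\omega(X)\equiv1$, $\omega$ is $\RR$--invariant and $X$ is vertical). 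Hence $\Lc_X(\de\omega)=0$, i.e.\ the curvature is invariant under the flow $\Phi^t$ of $X$.

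Next I would combine this invariance with the way $\Phi^t$ distorts a $G$--invariant metric. From the proof of Proposition \ref{PDivergenceOfX} (and the argument in Proposition \ref{PGInvariantMetricsFlatCase}, which only used $K$--equivariance of the relevant endomorphisms and so applies verbatim here), the flow scales the metric on the horizontal plane field $W$ by the factor $e^{\kappa t}$ with $\kappa=\div X=\lambda_2$: more precisely, pulling $\mu$ restricted to $W$ back by $\Phi^t$ multiplies it by $e^{\kappa t}$, because $f_t|W=r_t k_t|W$ with $r_t=e^{\frac12\kappa t}$. Now express $\de\omega_{m_0}$ in terms of $\mu_{m_0}$: by Corollary \ref{CCurvatureNonFlatCase}, $\delta=\de\omega_{m_0}$ restricted to $W$ is a nonzero volume form $v_2$ on the $2$--plane $W$, i.e.\ (up to a nonzero constant) the Riemannian area form of $\mu_{m_0}|W$. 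Under $\Phi^t$ this area form scales by $(e^{\frac12\kappa t})^2=e^{\kappa t}$, so $(\Phi^t)^*(\de\omega)_{m_0}=e^{\kappa t}\,\de\omega_{m_0}$ on $W$, and since $\de\omega$ vanishes on $L$ by Corollary \ref{CCurvatureNonFlatCase}, in fact $(\Phi^t)^*(\de\omega)_{m_0}=e^{\kappa t}\,\de\omega_{m_0}$.

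Comparing with $\Lc_X(\de\omega)=0$ — equivalently $(\Phi^t)^*\de\omega=\de\omega$ — gives $e^{\kappa t}\delta=\delta$ for all $t$, and since $\delta\neq0$ by assumption this forces $\kappa=0$, i.e.\ $\div X=0$. By Proposition \ref{PDivergenceOfX}, $X$ is a Killing field for every $G$--invariant metric, which is the claim. The main obstacle I anticipate is the bookkeeping in the third step: one must be careful that the constant relating $v_2$ to the $\mu_{m_0}$--area form on $W$ is itself flow--independent (it is, since it is pinned down at $m_0$ once and for all), and that the scaling exponents for the $2$--form and for the metric match up correctly — this is exactly the computation $r_t^2=e^{\kappa t}$ already carried out in Proposition \ref{PGInvariantMetricsFlatCase}, so no genuinely new work is needed, but the identification ``nonzero $2$--form on $W$ $=$ Riemannian area form'' and its behaviour under a conformal rescaling of the metric is the point where an error would most easily creep in.
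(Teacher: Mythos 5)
Your proof is correct and is essentially the paper's own argument: both establish $\mathcal L_X\de\omega=0$ via Cartan's formula together with the horizontality of the curvature (Corollary \ref{CCurvatureNonFlatCase}), and both derive $(\Phi^t)^*\de\omega=e^{\kappa t}\de\omega$ from the $G$--invariance of $\de\omega$ and the scaling $f_t|W=r_tk_t|W$ with $r_t=e^{\kappa t/2}$ taken from the proof of Proposition \ref{PGInvariantMetricsFlatCase}, concluding $\kappa=0$ since $\de\omega\neq 0$. The only difference is cosmetic (you phrase the scaling of $\de\omega|_W$ via the Riemannian area form rather than directly via the determinant of the $K$--equivariant endomorphism), so no further comment is needed.
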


\begin{proof}
With Proposition \ref{PDivergenceOfX} it is sufficient to show that $\kappa=\div X$ vanishes. Like in the proof of Proposition \ref{PGInvariantMetricsFlatCase} we obtain 
\[
 (\Phi^t)^*(\de\omega)=e^{\kappa t}\de\omega
\]
using the $G$--invariance of $\de\omega$. Hence $\mathcal L_X\de\omega=\kappa\de\omega$ but on the other hand we have $\mathcal L_X\de\omega=\de(i_X(\de\omega))$. Since $i_X(\de\omega)$ is $G$--invariant it is sufficient to evaluate this $1$--form in $m_0$. However with Corollary \ref{CCurvatureNonFlatCase} we see $i_{X_{m_0}}(\delta)=0$ which implies $\mathcal L_X\de\omega=0$. Using again Corollary \ref{CCurvatureNonFlatCase} $\de\omega$ restricted to the horizontal distribution is non--degenerated which forces $\kappa=0$. 
\end{proof}

\begin{prop}\label{PHisSubGroupOfG}
The fiber group $H \in \{\RR,\SOg(2)\}$ of the principal bundle $\pi \colon M \to N$ is a subgroup of the center of $G$ and $G':=G/H$ acts on $N$ effectively, transitively with isotropy group isomorphic to $\SOg(2)$ such that $\pi$ is an equivariant map.
\end{prop}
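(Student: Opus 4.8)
The plan is to first show that $H$ sits centrally in $G$, then build $G' := G/H$ and verify it has the asserted action on $N$. For the centrality, I would reuse the mechanism from Proposition \ref{PGCommutesWithIsotropyGroup} and Corollary \ref{CElementsOfKFCommuteWithK}: fix $m_0 \in M$ with orbit $F = H.m_0$ and isotropy $K = \SOg(2)$. We already know from Remark \ref{RCommutingActions} that the flow of $X$ — equivalently the $H$-action — commutes with all of $G$. So the only thing to establish is that $H$, viewed as a subgroup of $\Diff(M)$, actually lies \emph{inside} $G$; once that is known, the fact that the $H$-action commutes with the $G$-action immediately gives $H \subseteq Z(G)$. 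For $H = \SOg(2)$ (the circle case) this is automatic since the circle action arises from $G$ itself via $K_F$. For $H = \RR$ we are in the non-flat case, and here Corollary \ref{CXKillingField} is the key input: $X$ is a Killing field for a $G$-invariant metric $\mu$, so its flow $\Phi^t$ consists of isometries of $\mu$; moreover $\Phi^t$ preserves $X$ (the flow of a vector field commutes with itself) and hence preserves $\omega$ and the connection. The argument I would run is that $\Phi^t$ is then an isometry inducing the identity on $N$ and acting by a fixed translation on each fibre, hence a bundle automorphism covering $\id_N$; the maximality-free replacement for ``$G$ is all such isometries'' is not available, so instead I would argue directly that $\Phi^t$ already belongs to $G$ because $G$ acts transitively and $\Phi^t$ is determined along each $\RR$-orbit by a single $G$-element — more carefully, pick $g \in G$ with $g.m_0 = \Phi^t(m_0)$; since $g$ and $\Phi^t$ both preserve the orbit $F$ and agree at $m_0$, and both commute with $K$, the composite $g^{-1}\circ \Phi^t$ fixes $m_0$, commutes with the $H$-action and with $G$, hence fixes every point of every orbit through $F$'s $G$-translates, i.e.\ is the identity. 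Thus $\Phi^t = g \in G$.

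Next, having $H \subseteq Z(G)$, the quotient $G' := G/H$ is a Lie group and the $G$-action on $M$ descends to a $G'$-action on $N = M/H$: if $g \in H$ then $g$ acts on $M$ as a fibre translation, so it acts trivially on $N$; conversely the $G$-action on $N$ is well defined because $G$ commutes with $H$ and $\pi$ is $G$-equivariant by construction (the connection $\omega$ is $G$-invariant, so $G$ sends fibres to fibres). Transitivity of $G'$ on $N$ is inherited from transitivity of $G$ on $M$ composed with the surjection $\pi$. For the isotropy: let $K' \subseteq G'$ be the stabiliser of $\pi(m_0) \in N$; its preimage in $G$ is exactly $K_F = \{g \in G : g.m_0 \in F\}$ of Proposition \ref{PGCommutesWithIsotropyGroup}, so $K' = K_F/H$, and the epimorphism $\Psi \colon K_F \to H$ of that proposition, with $\ker\Psi = K$, shows $K_F/H \cong K_F/\Psi^{-1}(\ldots)$; more directly, from $1 \to K \to K_F \xrightarrow{\Psi} H \to 1$ (a central extension, Corollary \ref{CElementsOfKFCommuteWithK}) we get $K_F \cong K \times H$ as groups if the extension is also split, but in any case $K_F/H \cong K = \SOg(2)$ since $H = \Psi(K_F)$ maps isomorphically under $\Psi$ and $\ker(K_F \to K_F/H)$ is a complement-like copy of $H$; cleanest is to observe that the composite $K \hookrightarrow K_F \twoheadrightarrow K_F/H = K'$ is injective (because $K \cap H = \ker\Psi \cap H$; an element of $K \cap H$ lies in $H$ and fixes $m_0$, but the $H$-action is free, so it is trivial) and surjective (given $\bar g \in K'$ with representative $g \in K_F$, write $g.m_0 = h.m_0$ for $h \in H$, then $h^{-1}g \in K$ represents $\bar g$). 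Hence $K' \cong \SOg(2)$.

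Effectiveness of the $G'$-action on $N$ needs a separate check: if $\bar g \in G'$ acts trivially on $N$, a representative $g \in G$ preserves every fibre of $\pi$, acting on each by some translation; since $g$ commutes with the $H$-action and preserves the $G$-invariant connection $\omega$, the translation amount is constant along horizontal curves, hence constant on the connected $N$, so $g$ equals a fixed element $h \in H$, i.e.\ $\bar g$ is trivial in $G' = G/H$. Finally, $\pi$ is $G$-equivariant for the $G$-action on $M$ and the induced $G'$-action on $N$ by construction, which gives the last clause.

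\medskip
\noindent
I expect the genuinely delicate step to be showing $H \subseteq G$ (hence $\subseteq Z(G)$) in the non-flat $\RR$-case without invoking maximality of $G$ — i.e.\ turning ``$\Phi^t$ is an isometry preserving all the structure'' into ``$\Phi^t$ is literally an element of the given group $G$''. The transitivity-plus-rigidity argument sketched above (comparing $\Phi^t$ with a $G$-element agreeing at one point and using that their difference fixes a point, commutes with $K$, and commutes with the global $H$-action) is the crux; everything after that is bookkeeping with the central extension $1 \to K \to K_F \to H \to 1$ already in hand.
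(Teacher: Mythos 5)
Your overall route coincides with the paper's: use Corollary \ref{CXKillingField} to make the flow $\Phi^t$ a one-parameter group of isometries of a $G$-invariant metric, show $\Phi^t\in G$, deduce $H\subseteq Z(G)$ from Remark \ref{RCommutingActions}, and then descend the action to $N=M/H$; your bookkeeping for effectiveness and for the isotropy group $K'\cong K_F/H\cong K\cong\SOg(2)$ is correct and close to the paper's. The problem is in the step you yourself single out as the crux. You write that $g^{-1}\circ\Phi^t$ ``commutes with the $H$-action and with $G$, hence fixes every point \dots\ i.e.\ is the identity.'' But $g^{-1}$ is a particular element of $G$ chosen so that $g.m_0=\Phi^t(m_0)$, and there is no reason for it to be central, so $g^{-1}\Phi^t$ does \emph{not} commute with $G$. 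What is actually true is that it commutes with $H$ and (since $g\in K_F$, by Corollary \ref{CElementsOfKFCommuteWithK}) with $K$; together with fixing $m_0$ this only forces it to fix the single fibre $F=H.m_0$ pointwise, a $1$-dimensional set, which is nowhere near enough to conclude it is the identity on $M$. Indeed the conclusion you aim for is too strong: $\Phi^t$ need not equal $g$, only $gk$ for some $k\in K$.

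The missing ingredient is the rigidity of isometries. Set $\psi:=g^{-1}\circ\Phi^t$; it is an isometry of $(M,\mu)$ fixing $m_0$, it preserves orientation (both $\Phi^t$ and $g$ are connected to the identity), and its differential at $m_0$ fixes $X_{m_0}$, hence restricts on $W=L^{\perp}$ to an element of $\SOg(2)$. Since $\rho(K)$ is exactly the rotation group of $W$ fixing $L$, there is $k\in K$ with $\De\psi_{m_0}=\De k_{m_0}$, and because an isometry of a connected Riemannian manifold is determined by its $1$-jet at one point, $\psi=k$ and therefore $\Phi^t=gk\in G$. (The paper compresses this to ``since $\Phi^t$ maps $X$ into $X$ it is easy to see that $\Phi^t$ is an element of $G$.'') Once this is repaired the rest of your argument goes through; note also that the circle case $H=\SOg(2)$ is not quite ``automatic'' from $K_F$ alone --- the element of $K_F$ realising a given fibre translation on $F$ need not coincide with that translation as a diffeomorphism of all of $M$ --- but the same rigidity argument covers both cases uniformly.
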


\begin{proof}
Let $\mu$ be a $G$--invariant metric. The flow of $t\mapsto \Phi^t$ is a one--parameter subgroup of the isometry group of $(M,\mu)$ which we deduce from Corollary \ref{CXKillingField}. But since $\Phi^t$ maps $X$ into $X$ it is easy to see that $\Phi^t$ is indeed an element of $G$. We saw in Remark \ref{RCommutingActions} that the actions of $G$ and $H$ commute which means that $H$ is a subgroup of the center of $G$.

The group $G'=G/H$ acts on $N=M/H$ by the induced action of $G$ on $M$: if $[g] \in G/H$ and $[m] \in M/H$ then $[g].[m]:=[g.m]$ which is well--defined. Clearly $G'$ acts transitively on $N$ since $G$ does on $M$. Suppose for $g' \in G'$ we have $g'.n=n$ for all $n \in N$. If $g\in G$ represents $g'$ then there is a $h \in H$ such that $g.m_0=h.m_0$, so there is a $k \in K_{m_0}$ with $g=hk$. But this would imply that $[k]$ acts as the identity on $N$. We claim that this means that $k$ has to be the identity element, which proves that $G'$ acts effectively. Let $TM_{m_0}=L \oplus W$ be the decomposition of Proposition \ref{PRepresentationSO(2)}. By construction the differential of $\pi \colon M \to N$ in $m_0$ namely $\De\pi_{m_0}|W \colon W \to TN_{n_0}$ ($n_0=\pi(m_0)$) is an isomorphism. The assumption $\pi \circ k =\pi$ implies  $k.\xi=\xi$ for all $\xi \in W$. Thus $k$ has to be the identity on $M$ and therefore $g=h$.

The isotropy group $K'$ in $n_0=\pi(m_0)$ of $G'$ acting on $N$ is the image of $K$ under the quotient map $p \colon G \to G'$. Note that $K'$ has to be connected and $1$--dimensional for the same reasons $K$ is connected. Clearly we have $p(K)\subset K'$, so we have to show that $p|K$ is injective. For $k \in K$ suppose $p(k)=e$. Then, as above, we have $k \in H$ which implies that $k$ is the neutral element, since $k$ has a fixed point but $H$ is acting freely on $M$.
\end{proof}

As in the case of a flat connection there are three possibilities for the pair $(N,G')$. Moreover Proposition \ref{PHisSubGroupOfG} implies a central extension
\[
1 \lr H \lr G \lr G' \lr 1.
\]
We handle each case of $(N,G')$ separately, since different techniques are required to determine the geometries $(M,G)$. We start with the spherical case, i.e. $(N,G')=(S^2,\SOg(3))$.

\begin{prop}\label{PGeometryS3}
Let $(N,G')=(S^2,\SOg(3))$. Then $(M,G)$ is isomorphic to $(S^3, \Ug(2))$.
\end{prop}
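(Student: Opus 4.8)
The plan is to exploit the central extension
\[
1 \lr H \lr G \lr \SOg(3) \lr 1
\]
coming from Proposition \ref{PHisSubGroupOfG}, together with the facts that $\dim G = 4$, $G$ is connected, and $H\in\{\RR,\SOg(2)\}$ lies in the center of $G$. First I would work on the level of Lie algebras: the extension gives $0\to\fh\to\fg\to\fso(3)\to 0$, a central extension of the simple Lie algebra $\fso(3)$. Since $H^2(\fso(3);\RR)=0$ (Whitehead's lemma), this sequence splits as Lie algebras, so $\fg\cong\fso(3)\oplus\fh\cong\fso(3)\oplus\RR$. Hence the simply connected Lie group with Lie algebra $\fg$ is $\SUg(2)\times\RR$, and $G$ is a connected quotient of it by a discrete central subgroup. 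The next step is to pin down $G$ among these quotients using the constraint that $G$ must act on $M$ with compact isotropy $K\cong\SOg(2)$ and that $H$ acts freely.

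Second, I would identify $M$. The key observation is that the $G$--invariant connection $\omega$ on $\pi\colon M\to S^2$ is non--flat, and by Corollary \ref{CCurvatureNonFlatCase} its curvature $\de\omega$ restricted to the horizontal distribution is a non--zero $G$--invariant $2$--form; pushing down, it is (a constant multiple of) a $\SOg(3)$--invariant area form on $S^2$, which is not exact. Therefore the principal $H$--bundle $\pi$ is non--trivial, which forces $H=\SOg(2)$ (an $\RR$--bundle over a surface is always trivial, as used in Corollary \ref{CTrivialBundleIfFlat}), so $\RR$ is ruled out as the fiber. Thus $M$ is a principal $\SOg(2)=S^1$--bundle over $S^2$; these are classified by $H^2(S^2;\ZZ)\cong\ZZ$, i.e. by an Euler number $e\in\ZZ$. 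The long exact homotopy sequence (as already invoked after the definition of $N$) shows $M$ is simply connected, which rules out all $e$ with $|e|\neq 1$ (for $e=0$ one gets $S^2\times S^1$, for $|e|\geq 2$ a lens--space--like quotient), leaving $|e|=1$: the Hopf bundle, whose total space is $S^3$. One should check the non--trivial bundle genuinely occurs and that $\omega$ can be realized on it — but $S^3\to S^2$ does carry $\SOg(3)$--invariant connections with non--zero curvature, so this is consistent.

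Third, I would assemble the group action. We now know $M=S^3$, $H=\SOg(2)$ sits centrally in $G$ acting as the Hopf circle, and $G/H=\SOg(3)$ acts on $S^2=S^3/S^1$ standardly. The group $\Ug(2)$ is precisely such a group: its center is $\Ug(1)=\SOg(2)$ (scalar matrices), $\Ug(2)/\Ug(1)\cong\SOg(3)$, it has Lie algebra $\fsl(2,\CC)_{\text{compact}}\oplus\RR=\fso(3)\oplus\RR$, and it acts on $S^3\subset\CC^2$ transitively, effectively, with isotropy $\Ug(1)\cong\SOg(2)$ (the stabilizer of a unit vector), and the Hopf fibration is $\Ug(2)$--equivariant over $\SOg(3)\curvearrowright S^2$. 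It remains to show $G\cong\Ug(2)$ as the \emph{only} possibility, i.e. to rule out the other connected quotients of $\SUg(2)\times\RR$ with center containing a circle that projects onto $\SOg(3)$; here one uses effectiveness together with the requirement that $H$ act freely with quotient $S^2$ — for instance $\SUg(2)\times S^1$ acts on $S^3$ but not effectively, and one checks $\Ug(2)=(\SUg(2)\times S^1)/\ZZ_2$ is the correct effective quotient. Finally one verifies the equivariant diffeomorphism $(M,G)\cong(S^3,\Ug(2))$ by matching the isotropy data, which determines the action up to equivariant diffeomorphism since $M$ is a homogeneous space.

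The main obstacle I anticipate is the bookkeeping in the last step: going from the abstract central extension data (Lie algebra $\fso(3)\oplus\RR$, a central circle projecting onto $\SOg(3)$) to the precise global group $\Ug(2)$, ruling out $\SUg(2)\times\SOg(2)$ and other finite quotients, and showing the action on $S^3$ is the standard one. The effectiveness hypothesis and the freeness of the $H$--action are exactly the levers needed, but one must apply them carefully; everything on the Lie--algebra side is immediate from vanishing of $H^2(\fso(3))$.
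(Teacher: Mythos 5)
Your strategy is essentially the paper's: rule out $H=\RR$ because a non--flat connection would force a non--exact area form on the compact base $S^2$, split the Lie algebra extension $0\to\RR\to\fg\to\fso(3)\to 0$ by Whitehead's lemma, and realise $G$ as a quotient of $\SUg(2)\times\RR$ by a discrete central subgroup. The one genuinely different move is that you identify $M$ early, as the Euler--number--$\pm1$ circle bundle over $S^2$ forced by simple connectivity of $M$; the paper never classifies the bundle, but instead pins down $G=\Ug(2)$ first and only then recovers $M=G/K\cong S^3$ by listing the possible embeddings $z\mapsto\diag(z^{n},z^{m})$ of $K\cong\Ug(1)$ as diagonal matrices in $\Ug(2)$ and excluding $(n,m)=(1,1)$ because $K\cap H$ must be trivial. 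Your shortcut is clean, costs nothing, and even simplifies the later computation of $\pi_1(G)$ (once $M=S^3$ is known, the fibration $K\to G\to M$ gives $\pi_1(G)\cong\ZZ$ immediately).

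The step you yourself flag as the main obstacle is, however, where most of the content of the proposition lives, and your proposal only gestures at it. The paper's execution is: the homotopy sequences of $M\to S^2$ and $G\to\SOg(3)$ give $\pi_1(G)\cong\ZZ$, so $\pi_1(G)$ is generated by a single element $(c,d)$ of the center $\RR\times\ZZ_2$ of $\RR\times\SUg(2)$ with $c\neq 0$; the case $d=+1$ (which would give $G\cong\SOg(2)\times\SUg(2)$) is excluded via the simple connectivity of $G/K$ together with $K\cap H=1$; and the explicit map $(t,S)\mapsto e^{i\pi t/c}S$ identifies $(\RR\times\SUg(2))/\ZZ(c,-1)$ with $\Ug(2)$. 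Your proposed levers (effectiveness, freeness of the $H$--action, the isotropy data) do suffice --- for instance the candidate $\Ug(1)\times\SUg(2)$ with a diagonal circle as isotropy fails effectiveness precisely because $(-1,-I)$ is a central element lying in every conjugate of $K$ --- but none of this is carried out in the proposal, and without explicitly computing $\pi_1(G)$ and locating the generator in $\RR\times\ZZ_2$ you cannot yet exclude the other central quotients. So: same skeleton as the paper, one pleasant variation, and one essential verification still owed.
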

\begin{proof}
First we determine the fiber group. Suppose that $H=\RR$. Then there would be a global section $s \colon S^2 \to M$ and $s^*(\de\omega)$ would be a volume form on $S^2$, see Corollary \ref{CCurvatureNonFlatCase}. But this leads to a contradiction by Stokes' theorem since $\de(s^*(\omega))=s^*(\de\omega)$ is exact, hence $H=\SOg(2)$. The central extension 
\[
 1 \lr \SOg(2) \lr G \lr \SOg(3) \lr 1
\]
implies a central extension for their Lie algebras
\[
 0 \lr \RR \lr \fg \lr \fso(3) \lr 0.
\]
Using Whitehead's second lemma (see \cite[p.246]{Weibel1997}) the central extension splits and is isomorphic to the trivial one, since it is central. Hence the universal covering group of $G$ is $\RR \times \SUg(2)$ and $G\cong(\SUg(2)\times \RR)/\pi_1(G)$ where $\pi_1(G)$ is a discrete subgroup of center $\RR\times \ZZ_2$. Applying the long exact homotopy sequence to the principal bundles $M \to S^2$ and $G\to \SOg(3)$ we obtain that $\pi_1(G)$ is isomorphic to $\ZZ$. Let $(c,d)$ be a generator of $\pi_1(G)$ for $c \in \RR$ and $d \in\ZZ_2=\{\pm 1\}$. Then $c\neq 0$ since $\pi_1(G)$ is isomorphic to $\ZZ$. Moreover $d=-1$ since otherwise $G$ would be isomorphic to $\SOg(2) \times \SUg(2)$ and the quotient $G/K$ could not be simply connected since $K$ has to lie in $\SUg(2)$ (note that $K\cap H$ has to be trivial). The map $\RR \times \SUg(2) \to \Ug(2)$, $(t,S)\mapsto e^{\frac{i\pi t}{c}}S$ induces an isomorphism between $(\RR \times \SUg(2))/\pi_1(G)$ and $\Ug(2)$, hence $G=\Ug(2)$. 

Since $K$ is isomorphic to $\Ug(1)$ we may assume (after a conjugation in $\Ug(2)$) that $K$ consists of diagonal matrices. To see this, note that $K$ is abelian and hence all elements are simultaneously diagonalizable. Let $\Phi \colon \Ug(1) \to K \subset \Ug(2)$ be an isomorphism. Then there are homomorphisms $\Phi_i \colon \Ug(1) \to \Ug(1)$ such that
\[
 \Phi(z) = 
 \left(
 \begin{matrix}
  \Phi_1(z) & 0 \\
  0 & \Phi_2(z) 
 \end{matrix}
 \right).
\]
Moreover there are $n,m \in \ZZ$ such that $\Phi_1(z)=z^n$ and $\Phi_2(z)=z^m$. Since $\Phi$ is an isomorphism we obtain
$(n,m)=(1,1)$, $(n,m)=(1,0)$ or $(n,m)=(0,1)$. But $n=m=1$ would mean, that $K$ lies in the center which is a contradiction to $K \cap H$ is trivial. The quotient of the remaining possibilities is $S^3$ and $\Ug(2)$ acting on it by its standard action on $\CC^2$.

%
\end{proof}

\begin{rem}\label{RTildeSL}
For the next case it is useful to remind some facts about the Lie group $\widetilde\SLg$ which we define as the universal cover group of $\SOg^+(2,1)$. The group $\SOg^+(2,1)$ can be identified as a manifold with the circle bundle of the standard hyperbolic plane which is topologically given as $D^2 \times S^1$, so  the fundamental group of $\SOg^+(2,1)$ is isomorphic to $\ZZ$. But this implies that the center of $\widetilde\SLg$ is isomorphic to $\ZZ$ since $\SOg^+(2,1)$ is centerless. Moreover this implies that $\widetilde\SLg$ is topologically $\RR \times D^2$ which fibers over $D^2$ such that the projection $\RR \times D^2 \to D^2$ is equivariant with respect to $\widetilde\SLg$ and $\SOg^+(2,1)$.

The group $\RR \times \widetilde\SLg$ has a natural action on $\widetilde\SLg$: elements of $\RR$ act as translations in the fibers of $\widetilde\SLg \to D^2$ which cover the identity and $\widetilde\SLg$ acts by group multiplication. This action descends to an action of $\Gamma:=(\RR\times\widetilde\SLg)/\ZZ(1,1)$ on $\widetilde\SLg$ where $(1,1)$ is an element of the center $\RR\times \ZZ$ such that $1 \in \ZZ$ generates the fundamental group $\pi_1(\SOg^+(2,1))\subset\widetilde\SLg$. A more detailed discussion of $(\widetilde\SLg,\Gamma)$ can be found in \cite{Scott1983} which is known as \emph{ $\widetilde\SLg$-geometry}.

\end{rem}

\begin{prop}\label{PGeometrySL}
If $(N,G')=(D^2, \SOg^+(2,1))$ then $(M,G)$ is isomorphic to the $\widetilde\SLg$--geometry $(\widetilde\SLg,\Gamma)$.
\end{prop}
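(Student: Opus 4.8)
The plan is to mirror the strategy already used in Proposition~\ref{PGeometryS3}, but now the noncompact nature of $\SOg^+(2,1)$ forces the fiber group to be $\RR$ rather than $\SOg(2)$, and the central extension will no longer split. First I would pin down the fiber group $H$. Since $(N,G')=(D^2,\SOg^+(2,1))$ and $D^2$ is contractible, the principal $H$--bundle $\pi\colon M\to N$ admits a global section, so $M$ is diffeomorphic to $D^2\times H$; in particular $M$ being simply connected (and nonempty) is compatible with both $H=\RR$ and $H=\SOg(2)$. To decide between them I would use the curvature: by Corollary~\ref{CCurvatureNonFlatCase} the pullback $s^*(\de\omega)$ of the curvature by a global section $s$ is an area form on $D^2$. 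If $H=\SOg(2)$, then $M$ is compact in the fiber direction but more to the point, I would instead argue via the holonomy / Euler class — a nontrivial $\SOg(2)$--bundle over the contractible $D^2$ cannot exist, so $M=D^2\times S^1$ would be the trivial bundle, and then $M$ would not be simply connected (its $\pi_1$ would be $\ZZ$), contradicting the standing assumption. Hence $H=\RR$, and correspondingly $M\cong D^2\times\RR\cong\RR^3$ as a manifold, with $X$ (equivalently $\partial_t$ along the fiber) the $G$--invariant vector field, and the central extension reads
\[
0\lr\RR\lr\fg\lr\fso(2,1)\lr 0 .
\]

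Next I would analyze this central Lie algebra extension. Unlike the $\fso(3)$ case, $H^2(\fso(2,1);\RR)$ is one-dimensional (coming from the Killing-form symplectic class on the corresponding coadjoint picture, equivalently from $H^2(\SOg^+(2,1);\RR)\cong\RR$ detected by the Euler class of $D^2\times S^1$), so there are two possibilities: the split extension $\fg\cong\fso(2,1)\oplus\RR$ and the nontrivial one. The nontrivial class is exactly the one realized by the Lie algebra of $\widetilde\SLg\times\RR$ modulo nothing — more precisely, the relevant simply connected group is $\RR\times\widetilde\SLg$ in the split case and again $\RR\times\widetilde\SLg$ as the universal cover in the nonsplit case, because a perfect Lie algebra's central extensions are all captured at the level of the universal central extension and the simply connected group covering $\fg$ is in all cases a quotient-free central extension whose underlying manifold is $\RR\times(\RR\times D^2)$. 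So: the universal covering group $\widetilde G$ is $\RR\times\widetilde\SLg$, and $G=\widetilde G/\pi_1(G)$ where $\pi_1(G)\subset Z(\widetilde G)=\RR\times\ZZ$ (here $\ZZ=Z(\widetilde\SLg)$, cf.\ Remark~\ref{RTildeSL}). Applying the long exact homotopy sequences to the two principal bundles $H\to M\to N$ and $H'\to G\to G'$ (with $H'$ the identity component of the preimage of $K$... more simply, to $\RR\to M\to D^2$ and to $G\to\SOg^+(2,1)$ via $K'\cong\SOg(2)$) I would compute $\pi_1(G)\cong\ZZ$: from $M=\RR^3$ simply connected and the bundle $\SOg(2)\to G\to M$... rather, from the fibration $K\to G\to M$ with $K\cong\RR$ (note $H\cap K$ trivial so $K$ maps isomorphically to $K'\cong\SOg(2)$ only after passing to $G'$; at the level of $G$ the isotropy is still the $\SOg(2)$ lifting) the sequence $\pi_1(M)\to\pi_0(K)\to\ldots$ gives $\pi_1(G)\cong\pi_1(\SOg(2))\cong\ZZ$, consistently with $G'=G/\RR$ having $\pi_1\cong\ZZ$.

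Then I would identify the generator. Write the generator of $\pi_1(G)$ as $(c,n)\in\RR\times\ZZ$. One shows $n\neq 0$: if $n=0$ then $\pi_1(G)$ would lie in the $\RR$--factor, forcing $G\cong(\RR/\ZZ c)\times\widetilde\SLg=\SOg(2)\times\widetilde\SLg$; but then $K$, being a one-dimensional compact subgroup, would have to sit inside a maximal compact of $\widetilde\SLg$ — which is trivial since $\widetilde\SLg$ is contractible — or inside the $\SOg(2)$ factor, i.e.\ $K\subset H$, contradicting that $H$ acts freely while $K$ has a fixed point. So $n\neq 0$, and rescaling the $\RR$--factor we may take $n=1$; moreover $c\neq 0$ because otherwise $\pi_1(G)=\ZZ(0,1)\subset\{0\}\times\ZZ$ would make $G\cong\RR\times(\widetilde\SLg/\ZZ)=\RR\times\SOg^+(2,1)$, whose quotient by $K\cong\SOg(2)$ (forced to be a compact subgroup of $\SOg^+(2,1)$) is $\RR\times D^2$ — fine as a manifold, but this is precisely the \emph{flat} geometry $(D^2\times\RR,\SOg^+(2,1)\times\RR)$ of Theorem~\ref{TClassificationFlatGeometries}(b), contradicting $\de\omega\neq 0$ (one checks the induced connection there is flat). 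Hence $(c,1)$ with $c\neq 0$ generates, and rescaling the $\RR$--factor again we may normalize $c=1$, so $\pi_1(G)=\ZZ(1,1)$ and $G=(\RR\times\widetilde\SLg)/\ZZ(1,1)=\Gamma$ in the notation of Remark~\ref{RTildeSL}.

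Finally I would recover the action. With $G=\Gamma$ and isotropy $K\cong\SOg(2)$ satisfying $K\cap H=\{e\}$, the homogeneous space $M=G/K$ must be the simply connected $3$--manifold fibering over $D^2$ with fiber $\RR$ and nonzero curvature; by Remark~\ref{RTildeSL} the manifold $\widetilde\SLg$ itself, with the $\Gamma$--action described there (translations in the $\widetilde\SLg\to D^2$ fibers together with left multiplication), is exactly such a geometry, and its isotropy in the identity coset is the $\SOg(2)$ sitting diagonally — which meets the central $\RR$ trivially. An equivariance argument (both $G/K$ and $(\widetilde\SLg,\Gamma)$ are simply connected homogeneous spaces of the same group with conjugate isotropy, since any two one-dimensional compact-image subgroups of $\Gamma$ mapping isomorphically onto $\SOg(2)\subset\SOg^+(2,1)$ are conjugate) then yields an equivariant diffeomorphism $(M,G)\cong(\widetilde\SLg,\Gamma)$.

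The main obstacle I expect is the bookkeeping around the two-dimensionality of $H^2(\fso(2,1);\RR)$ together with the homotopy-sequence computation of $\pi_1(G)$: one must be careful that \emph{both} the split and the nonsplit extension lead to the \emph{same} simply connected group $\RR\times\widetilde\SLg$ (because $\widetilde\SLg$ already absorbs the nontrivial class as a topological, not algebraic, phenomenon), so that the whole classification of $(M,G)$ genuinely reduces to pinning down the discrete central subgroup $\pi_1(G)$, and then to ruling out the two degenerate generators $(c,0)$ and $(0,n)$ by the fixed-point-versus-free-action dichotomy and by flatness of the connection respectively.
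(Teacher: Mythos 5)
Your overall strategy matches the paper's: rule out $H=\SOg(2)$, lift to the simply connected group $\RR\times\widetilde\SLg$, compute $\pi_1(G)\cong\ZZ$, pin its generator down to $(1,1)$, and conclude $G=\Gamma$. Two steps, however, contain genuine errors. First, your treatment of the central extension $0\lr\RR\lr\fg\lr\fso(2,1)\lr 0$ is wrong: $\fso(2,1)\cong\fsl(2,\RR)$ is semisimple, so by Whitehead's second lemma $H^2(\fsl(2,\RR);\RR)=0$ and the extension necessarily splits; there is no nontrivial class. The nontriviality of $\widetilde\SLg\to\SOg^+(2,1)$ is a covering-space phenomenon, invisible at the Lie algebra level. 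Your fallback claim that both the split and the nonsplit extension lead to the same simply connected group is false as a principle --- a genuinely nonsplit central extension integrates to a non-isomorphic simply connected group, exactly as happens for $\fe(2)$ in Proposition \ref{PGeometryNil}, where the nonsplit class produces $\Nil\rtimes\RR$ rather than $\RR\times\widetilde{\Eg_0(2)}$. You reach the correct universal cover $\RR\times\widetilde\SLg$, but only because the incorrect premise happens not to matter here; as written the argument would not survive in the parallel euclidean case.

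Second, your normalization of the generator $(c,n)$ of $\pi_1(G)\subset\RR\times\ZZ$ is incomplete: rescaling the $\RR$--factor changes $c$ but cannot change the integer $n$, so your argument only excludes $n=0$ and leaves $|n|\geq 2$ open. In that case $G/H$ would be the $|n|$--fold cover of $\SOg^+(2,1)$ rather than $\SOg^+(2,1)$ itself, so one must invoke, as the paper does, that $(\pr_2)_*\colon\pi_1(G)\to\pi_1(\SOg^+(2,1))$ is an isomorphism, whence $n=\pm1$; your exclusions of $n=0$ and $c=0$ are fine. Finally, your closing step differs from the paper's: instead of explicitly reconstructing the action (the cocycle $s(x,a)$ and the homomorphism $\Psi$ on $\widetilde K$), you argue that all circle subgroups of $\Gamma$ covering a point stabilizer of $\SOg^+(2,1)$ are conjugate, so $(M,G)\cong(\Gamma/K,\Gamma)\cong(\widetilde\SLg,\Gamma)$. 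This is a legitimate and shorter route, provided you verify that the preimage in $\Gamma$ of a stabilizer $\SOg(2)\subset\SOg^+(2,1)$ is isomorphic to $\RR\times S^1$, hence has a unique maximal compact subgroup, and that the $\Gamma$--action of Remark \ref{RTildeSL} indeed has such a circle as isotropy.
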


\begin{proof}
Since $D^2$ is contractible the principal bundle $M \to N$ is the trivial $\RR$--principal bundle $\RR \times D^2 \to D^2$. This implies that $\pi_2(M)$ is trivial and using the long homotopy sequence for the principal bundle $\SOg(2) \to G \to M$ we obtain that $\pi_1(G)$ is isomorphic to $\ZZ$. Similarly to Proposition \ref{PGeometryS3}, we obtain the central extension
\[
 0 \longrightarrow \RR \longrightarrow \fg \longrightarrow \fsl(2,\RR)\to 0
\]
which has to be isomorphic to the trivial one
\[
 0 \longrightarrow \RR \longrightarrow \RR\times \fsl(2,\RR) \longrightarrow \fsl(2,\RR)\to 0
\]
since $\fsl(2,\RR)$ is semisimple. One then easily checks that the following diagram commutes
\[
  \begin{CD}
     @. 1 @. 1 @. 1\\
     @.	@VVV @VVV @VVV \\
     1 @>>> \pi_1(\RR) @>>> \pi_1(G) @>(\pr_2)_*>> \pi_1(\SOg^+(2,1)) @>>> 1\\
      @.	@VVV 	@VVV	@VVV \\
     1@>>> \RR @>>>  \RR\times\widetilde\SLg @>\pr_2>>  \widetilde\SLg@>>> 1 \\
     @.		   @VVV 		    @V\pi VV		      @V\pi_+ VV\\
      1  @>>> \RR @>>>		G	    @>p>> \SOg^+(2,1) @>>> 1.\\
     @.	@VVV @VVV @VVV \\
    @. 1 @. 1 @. 1\\
  \end{CD}.
\]
The center of $\RR\times \widetilde\SLg$ is isomorphic to $\RR \times \ZZ$ where $\ZZ$ is the group $\pi_1(\SOg^+(2,1)) \subset \widetilde\SLg$. Moreover  the projection map $\pr_2 \colon \RR \times \widetilde\SLg \to \widetilde\SLg$ induces an isomorphism on the fundamental groups $(\pr_2)_* \colon \pi_1(G) \to \pi_1(\SOg^+(2,1))$ which is simply the restriction of $\pr_2$ to $\pi_1(G)$ seen as a subgroup of $\RR\times\widetilde\SLg$. Hence a generator of $\pi_1(G)$ must be of the form $(c,1) \in \RR \times \ZZ$ where $1$ is a generator of $\ZZ=\pi_1(\SOg^+(2,1))$. However we may exclude $c=0$ since otherwise $G$ would be isomorphic to $\RR \times \SOg^+(2,1)$ which would be a geometry with flat $G$--invariant connection. Therefore we assume that $(1,1) \in \RR\times \ZZ$ is a generator of $\pi_1(G)$, since if $c\neq 1$ we may consider the automorphism $\Phi \colon \RR\times \widetilde\SLg \to \RR\times \widetilde\SLg$, $\Phi(x,a)=(x/c,a)$ which induces an isomorphism between $(\RR\times \widetilde\SLg/(\ZZ(
c,1))$ and $(\RR\times \widetilde\SLg/(\ZZ(1,1)
)$.

Let $\pi \colon \RR \times \widetilde\SLg \to G$ be the quotient map. Then $\widetilde \SLg$ acts on $\RR\times D^2$ via $\pi$ as a subgroup of $\RR \times \widetilde\SLg$. We claim that $\widetilde\SLg$ acts transitively on $\RR \times D^2$: Let $p \colon G \to \SOg^+(2,1)$ be the quotient map of $G \to G/H=\SOg^+(2,1)$ (see Proposition \ref{PHisSubGroupOfG}) and let $\pi_+ \colon \widetilde\SLg \to\SOg^+(2,1)$ be the universal cover map. Then we have $p \circ\pi =\pi_+\circ \pr_2$ since the diagram above commutes. For $(t,x) \in \RR\times D^2$ the action of $a \in \widetilde\SLg$ is then defined by $\pi(0,a).(t,x)$. The second entry is given by $\pi_+(a).x$ due to the fact that $p \circ\pi =\pi_+\circ \pr_2$ and since the  projection $\RR\times D^2 \to D^2$ is equivariant (see Proposition \ref{PHisSubGroupOfG}). Hence there is a map $f \colon \RR\times D^2 \times \widetilde\SLg \to \RR$ such that 
\[
\pi(0,a).(t,x) = (f(t,x,a),\pi_+(a).x) 
\]
with the property
\[
 f(t,x,ab)=f(f(t,x,b),\pi_+(b).x,a)
\]
for $a,b \in \widetilde\SLg$.
The differential of $\pi(0,a)$ as a map from $M$ to itself maps $\partial_t$ into 
itself since $\pi(0,a)\in G$. And this implies that if $x$ and $a$ are fixed the map $f$ is a translation in the fiber, i.e. there is a smooth function $s \colon D^2\times \widetilde \SLg \to \RR$ such that $f(t,x,a)=t+s(x,a)$. Then from the property of $f$ above we obtain
\[
 s(x,ab)=s(\pi_+(b).x,a) + s(x,b) \eqno (\ast)
\]
for $a,b \in \widetilde\SLg$. Note moreover that for  $z \in \pi_1(\SOg^+(2,1))\cong \ZZ$ with $z\neq 0$ the element $[(0,z)] \in G=(\RR\times\widetilde\SLg/\pi_1(G))$ is equal to $[(-z,0)]$, hence $(0,z)$ acts on $\RR \times D^2$ as constant translation, i.e. $\pi(0,z).(t,x)=(t-z,x)$ for all $(t,x)$.

Fix a point $x_0 \in D^2$ and let $K$ be the isotropy group in that point of $\SOg^+(2,1)$. The subgroup $\widetilde K :=(\pi_+)^{-1}(K)$ is isomorphic to $\RR$. To see this note the following: Let $Z=\pi_1(\SOg^+(2,1))\subset \widetilde\SLg$ then $\widetilde K / Z = K$ and $\widetilde \SLg/\widetilde K = \SOg^+(2,1)/K =D^2$, hence $\tilde K$ is connected. Furthermore $Z$ is a disrete subgroup of $\widetilde K$ which excludes the case $\tilde K=\SOg(2)$.

Now consider the map $\Psi \colon \widetilde K \to \RR$, $a \mapsto \Psi(a)=:s(x_0,a)$. Using the fact, that $\pi_+(\tilde K)$ fixes the point $x_0$, it follows from $(\ast)$ that $\Psi$ is a homomorphism. Since $\widetilde K$ is isomorphic to $\RR$, $\Psi$ is either trivial or an isomorphism. Suppose $\Psi$ is trivial. Then every element in $0\times Z=0\times \pi_1(\SOg^+(2,1))\subset \tilde 0\times K\subset \RR\times\widetilde\SLg$ would act as the identity on $\RR \times D^2$ which then in turn has to lie in $\ker\pi$ but this is a contradiction since the group $\ker\pi = \pi_1(G)$ is generated by $(1,1)$.

Finally it is easy to see that $\widetilde\SLg$ acts transitively on $\RR\times D^2$ and since the isotropy group has to be a point, we may identify $\RR\times D^2$ with $\widetilde \SLg$.

If we sum up the results in this proposition we obtain the same action of $\RR\times\widetilde\SLg$ on $\widetilde\SLg$ as for the $\widetilde\SLg$--geometry which descends to an action of $\Gamma$ on $\widetilde\SLg$.
\end{proof}

\begin{prop}\label{PGeometryNil}
Suppose $(N,G')=(\RR^2,\Eg_0(2))$. Then $(M,G)$ is isomorphic to the $\Nil$--geometry.
\end{prop}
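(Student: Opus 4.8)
The plan is to mirror the strategy used in Propositions \ref{PGeometryS3} and \ref{PGeometrySL}: first pin down the fiber group $H$ and the topology of $M$, then identify the Lie algebra $\fg$ via the central extension, then reconstruct $G$ and its action on $M$ concretely. Since $N=\RR^2$ is contractible, the principal bundle $\pi\colon M\to N$ is trivial, so $H=\RR$ and $M\cong\RR^2\times\RR\cong\RR^3$; in particular $M$ is contractible. Applying the long exact homotopy sequence to $\SOg(2)\to G\to M$ gives $\pi_1(G)\cong\ZZ$, with $\pi_1(G)$ mapping isomorphically to $\pi_1(\Eg_0(2))\cong\pi_1(\SOg(2))\cong\ZZ$ under the quotient $G\to G'=\Eg_0(2)$.

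Next I would analyze the central extension $0\to\RR\to\fg\to\fe(2)\to 0$ of Lie algebras coming from Proposition \ref{PHisSubGroupOfG}. Here $\fe(2)=\RR^2\rtimes\fso(2)$ is solvable, not semisimple, so Whitehead's lemma is unavailable and the extension need not split; this is precisely what produces the Heisenberg algebra rather than $\RR\times\fe(2)$. Concretely, write $\fe(2)$ with basis $e_1,e_2$ (translations) and $r$ (rotation), so $[r,e_1]=e_2$, $[r,e_2]=-e_1$, $[e_1,e_2]=0$, and let $z$ span the central $\RR$. A $2$-cocycle $\beta\in H^2(\fe(2),\RR)$ determines the extension; the key computation is that $\de\omega_{m_0}$, which by Corollary \ref{CCurvatureNonFlatCase} is a nonzero volume form on the horizontal plane $W$, is exactly (the pullback of) this cocycle evaluated on the translation plane, so $\beta(e_1,e_2)\neq 0$ while the cocycle can be normalized to vanish on the other pairs. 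Thus in $\fg$ we get $[\tilde e_1,\tilde e_2]=z$ (up to scaling $z$, assume $=1$), $[\tilde r,\tilde e_1]=\tilde e_2$, $[\tilde r,\tilde e_2]=-\tilde e_1$, and $z$ central. The subalgebra spanned by $\tilde e_1,\tilde e_2,z$ is the Heisenberg algebra $\fn$, and $\fg=\fn\rtimes\RR\tilde r$ with $\tilde r$ acting as an infinitesimal rotation of the plane $\langle\tilde e_1,\tilde e_2\rangle$; note $z$ spans $\fh=\mathrm{Lie}(H)$, consistent with $X$ being vertical.

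I would then identify the simply connected group with this Lie algebra. Its universal cover is $\mathrm{Nil}\rtimes\RR$ where $\mathrm{Nil}$ is the Heisenberg group; but one checks (using that $\pi_1(G)\cong\ZZ$ maps isomorphically onto $\pi_1(\SOg(2))$ sitting in the $\RR\tilde r$ direction while $\mathrm{Nil}$ is contractible) that the relevant quotient has underlying manifold $\mathrm{Nil}\cong\RR^3$ and that $G$ itself is $\mathrm{Nil}\rtimes\SOg(2)$ — the maximal group in the $\mathrm{Nil}$-geometry, with $H=\RR$ the center of $\mathrm{Nil}$ and $K\cong\SOg(2)$ acting by rotations. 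Finally, to exhibit the action: $G$ acts on $M$ with $H$ acting as translations along the center of $\mathrm{Nil}$ and $G'=\Eg_0(2)$ acting on $N=\RR^2$ in the standard way, so the action of $G=\mathrm{Nil}\rtimes\SOg(2)$ on $M=\mathrm{Nil}$ is by left translations together with the $\SOg(2)$-rotations; this is, by definition, the $\mathrm{Nil}$-geometry, and since $M$ is simply connected and $\fg$ determines everything, the identification $(M,G)\cong(\mathrm{Nil},\mathrm{Nil}\rtimes\SOg(2))$ is forced.

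\textbf{Main obstacle.} The delicate point is the cocycle computation: one must show that the class of the central extension is genuinely nontrivial (equivalently, $\de\omega\neq 0$ rules out $\fg\cong\RR\times\fe(2)$) and, more than that, that it is the Heisenberg class rather than some other element of $H^2(\fe(2),\RR)$ — i.e. that the cocycle, after normalization, is supported on the translation plane where $\de\omega$ lives. This requires carefully relating the abstract cocycle to the geometric curvature form $\de\omega_{m_0}$ and using $i_{X_{m_0}}\delta=0$ from Corollary \ref{CCurvatureNonFlatCase} to kill the components involving the vertical/rotation directions. Once the Lie algebra is correctly identified, passing to the group and reading off the action is routine, as in the previous two propositions.
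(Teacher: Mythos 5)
Your overall architecture coincides with the paper's: trivial bundle over the contractible base, $\pi_1(G)\cong\ZZ$, the central extension $0\to\RR\to\fg\to\fe(2)\to 0$, the computation $H^2(\fe(2);\RR)\cong\RR$, and the conclusion $G=\Nil\rtimes\SOg(2)$ acting on $M=\Nil$. The one place where you genuinely diverge is the crux you yourself flag: deciding which class in $H^2(\fe(2);\RR)$ the extension realizes. The paper does not attempt to read the cocycle off the curvature form. Instead it integrates the trivial extension, shows the resulting group would have to be $\RR\times\Eg_0(2)$ (after checking that all possible generators $\gamma_c$ of $\pi_1(G)$ in the center of $\RR\times\widetilde{\Eg_0(2)}$ give isomorphic quotients), and excludes it because that group yields a flat canonical connection; all non-trivial classes are then weakly isomorphic to $\omega_1$, whose integration to $\Nil\rtimes_\rho\RR$ is guessed from \cite{Scott1983} and verified on the Lie algebra level. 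Your proposed direct identification of $\de\omega_{m_0}$ with the cocycle is a reasonable alternative, but as sketched it contains a conceptual slip: the rotation generator $r$ of $\fe(2)$ lifts into the isotropy algebra $\fk\subset\fg$, which does not appear in $T_{m_0}M\cong\fg/\fk$ at all, so Corollary \ref{CCurvatureNonFlatCase} (i.e.\ $i_{X_{m_0}}\delta=0$) constrains the components of $\delta$ in the direction of $L\cong\fh$, not the values of the cocycle on the pairs $(e_i,r)$. Normalizing the cocycle to vanish on those pairs is a purely cohomological fact about $\fe(2)$ (the paper's list of representatives $\omega_\lambda$), and relating $\beta(e_1,e_2)$ to $\delta|_W$ requires a linear splitting of $\fg\to\fe(2)$ compatible with both $\fk$ and the horizontal space; this is precisely the work you have deferred, so the decisive step of your proof is not yet done.

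Two further steps you assert but do not justify, and which the paper treats explicitly: first, identifying $\pi_1(G)$ as a subgroup $\ZZ\gamma_c$ of the center $\RR\times\ZZ$ of the simply connected group $\Nil\rtimes\RR$ and showing that every choice of $c$ gives a quotient isomorphic to $\Nil\rtimes\SOg(2)$ (via the explicit maps $\varphi_c$); second, showing that the isotropy group $K$ is exactly the $\SOg(2)$--factor rather than some other circle subgroup, which the paper deduces by re-running the homomorphism $\Psi$ argument of Proposition \ref{PGeometrySL} to see that $\Psi$ must vanish here. Neither point is deep, but both are needed before you may conclude that $M=\Nil$ and that the action is the standard one of the $\Nil$--geometry.
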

\begin{proof}
The principal bundle $M \to \RR^2$ is the trivial $\RR$--principal bundle $\RR^2\times \RR \to \RR^2$ since $\RR^2$ is contractible. Repeating the arguments of the preceding two propositions we obtain $\pi_1(G)=\ZZ$ and the central extension of Lie Algebras
\[
 0 \lr \RR \lr \fg \lr \fe(2)\lr 0.
\]
The second Lie algebra cohomology group $H^2(\fe(2);\RR)$ (where $\RR$ is the trivial $\fe(2)$--module) is isomorphic to $\RR$. Every element in  $H^2(\fe(2);\RR)$ has a unique representative $\omega_\lambda$ given by
\[
 \left(
 \begin{matrix}
  0 & \lambda & 0 \\
  -\lambda & 0 & 0 \\
  0 & 0 & 0 \\
 \end{matrix}
 \right) 
\]
for $\lambda \in \RR$, which shows also the isomorphism between the cohomology group and $\RR$.
If $\omega_\lambda \in H^2(\fe(2);\RR)$ the isomorphism class of central extensions are of the form
\[
 0 \lr \RR \lr \RR \times_{\omega_\lambda} \fe(2) \lr \fe(2)\lr 0.
\]
(See \cite{Weibel1997} for the definition of the Lie Algebra $\RR \times_{\omega_\lambda} \fe(2)$.)

We consider first the trivial central extension
\[
  0 \lr \RR \lr \RR \times \fe(2) \lr \fe(2)\lr 0.
\]
The universal cover group $\widetilde{\Eg_0(2)}$ of $\Eg_0(2)$ is given as the semidirect product $\RR^2\times_\rho \RR$ where $\rho \colon \RR \to \GLg(\RR^2)$ and $\rho_\theta$ is the rotation around the origin with rotation angle $\theta$. Then the center is isomorphic to $\ZZ$ and embedded as $z\mapsto (0,2\pi z)$. The Lie group $\RR \times \widetilde{\Eg_0(2)}$ has Lie algebra $\RR \times \fe(2)$ and its center is isomorphic to $\RR \times \ZZ$. Like in Proposition \ref{PGeometrySL} a generator of $\pi_1(G)$ has to have the form $\gamma_c:=(c,0,2\pi)$ for $c \in \RR$. The groups $(\RR \times \widetilde{\Eg_0(2)})/(\ZZ\gamma_c)$ are isomorphic to $(\RR \times \widetilde{\Eg_0(2)})/(\ZZ\gamma_0)$ and an isomorphism is induced by the linear map $\varphi_c \colon \RR \times \widetilde{\Eg_0(2)}\to \RR \times \widetilde{\Eg_0(2)}$ (seen as vector spaces) defined by the matrix
\[
 \left(
 \begin{matrix}
 1 & 0 & -c/2\pi \\
 0 & E_2 & 0 \\
 0 & 0 & 1 \\
 \end{matrix}
 \right).
\]
A simple calculation shows that $\varphi_c$ is a Lie group isomorphism such that $\varphi(\gamma_c)=\gamma_0$. Thus the group $G$ is isomorphic to $\RR \times \Eg_0(2)$ which cannot be the case since the induced $G$--invariant connection is flat.

Next we consider the central extension of 
\[
0 \to \RR \to \RR \times_{\omega_1} \fe(2)\to \fe(2) \to 0.
\]
There is a basis $(e_0,e_1,e_2,e_3)$ of $\RR \times_{\omega_1} \fe(2)$ such that the Lie brackets are expressed by the relations
\[
 [e_0,e_i]=0,\quad [e_1,e_2]=e_0,\quad [e_1,e_3]=-e_2,\quad [e_2,e_3]=e_1,
\]
for $i=1,2,3$. This means that $e_0$ spans the center and the other the $\fe(2)$ part. Now it is easy to see that $\RR \times_{\omega_1} \fe(2)$ is isomorphic to $\mathfrak{nil}\rtimes_\sigma \RR$ where $\mathfrak{nil}$ is the $3$--dimensional Heisenberg Lie algebra and $\sigma \colon \RR \to \Derg(\mathfrak{nil})$ is defined as $\sigma_{e_3}=\ad_{e_3}$ (see e.g. \cite[p.115]{Konstantis2013}. Thus we obtain the central extension
\[
  0 \lr \RR \lr \mathfrak{nil}\rtimes \RR \lr \fe(2)\lr 0.
\]
where the inclusion is defined by $\lambda \mapsto\lambda e_0$ and the projection is given by $e_0\mapsto 0$ and $e_i\mapsto e_i$ for $i=1,2,3$. Integrating this semidirect product turns out to be a very hard task, therefore we guess the representation $\rho\colon \RR \to \Nil$. Comparing this situation with the $\Nil$--geometry in \cite{Scott1983}  we define (cf. \cite[p. 467]{Scott1983})
\[
 \rho_\theta({\mathbf x},z)=\left(R_\theta({\mathbf x}, z+\tfrac{1}{2}+s(cy^2-cx^2-2sxy)\right)
\]
where ${\mathbf x}=(x,y)$, $R_\theta({\mathbf x})$ rotates ${\mathbf x}$ through $\theta$ in $\RR^2$, $s=\sin\theta$ and $c=\cos\theta$. A straightforward calculation shows that the Lie algebra of $\Nil\rtimes_\rho \RR$ is $\mathfrak{nil}\rtimes_\sigma \RR$ (the computations were done e.g. in \cite[p. 115]{Konstantis2013}). The center is again isomorphic to $\RR \times \ZZ$ and it is embedded in $\Nil \rtimes \RR$ as $(z,l)\mapsto (0,0,z,l)$. The generator of $\pi_1(G)$ has to have the form $\gamma_c=(0,c,2\pi)$. The groups $(\Nil\rtimes\RR)/\ZZ\gamma_c$ are isomorphic to $(\Nil\rtimes\RR)/\ZZ\gamma_0=\Nil\rtimes \SOg(2)$ and an isomorphism is e.g. the linear map $\varphi_c$ given by the matrix
\[
 \left(
 \begin{matrix}
  E_2 & 0 & 0 \\
  0 & 1 &-c/2\pi \\
  0 & 0 & 1 \\
 \end{matrix}
 \right),
\]
where we regard $\Nil\rtimes \RR$ as $\RR^2\times \RR\times \RR$. We conclude $G=\Nil \rtimes \SOg(2)$. Repeating the arguments from the previous proposition, we see that $\widetilde\Eg_0(2)$ is acting like $a.(t,x)=(t+s(x,a),\pi(a).x)$ where $\pi \colon \widetilde{\Eg_0(2)} \to \Eg_0(2)$ is the universal cover homomorphism. But restricting $\Psi$ to $\ker\pi'=\pi_1(G)$ one obtains that $\Psi$ has to be the zero map, since it is either an isomorphism or trivial. Consequently this implies that the isotropy group of $\Nil\rtimes \SOg(2)$ is the $\SOg(2)$--part. Moreover this shows that $M$ is actually the Heisenberg group $\Nil$, which clarifies how $G$ is acting on $M$. 

Finally the other central extensions defined by $\omega_\lambda$ for $\lambda\neq 0,1$ are \emph{weakly isomorphic} to the one defined by $\omega_1$. Here two central extensions $0 \to \fh \to \fg \to \fk \to 0$ and $0 \to \fh \to \fg' \to \fk \to 0$ are called weakly isomorphic if the diagram
\[
  \begin{CD}
     0@>>>\fh@>>>  \fg @>>>  \fk@>>> 0 \\
     @.		   @VVV 		    @VVV		      @VVV\\
      0@>>>\fh@>>>  \fg' @>>> \fk@>>> 0 \\
  \end{CD}.
\]
commutes and the vertical maps are isomorphisms. Clearly two weakly isomorphic extensions induce isomorphic geometries $(M,G)$.

\end{proof} 
\noindent
We sum up the list of geometries with non--flat connection $\omega$ in

\begin{thm}\label{TNonFlatGeometries}
If $(M,G)$ is an axially symmetric geometry such the the $G$--invariant connection is non--flat. Then $(M,G)$ is equivariant diffeomorphic to exactly one the following geometries:
\begin{itemize}
 \item[(a)] $(S^3,\Ug(2))$,
 \item[(b)] $(\widetilde{\SLg}, \Gamma)$,
 \item[(c)] $(\Nil,\Nil\rtimes \SOg(2))$.
\end{itemize}
\end{thm}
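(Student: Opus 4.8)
The plan is to assemble the three propositions already proved in this subsection into the stated trichotomy, after first recalling why this is an exhaustive case distinction. First I would invoke Proposition~\ref{PHisSubGroupOfG}, which tells us that $G' = G/H$ acts on the simply connected surface $N$ transitively, effectively, with $1$--dimensional isotropy; exactly as in the flat case, $N$ therefore carries a $G'$--invariant metric of constant Gaussian curvature and hence, as a homogeneous space, $(N,G')$ is equivariant diffeomorphic to precisely one of $(S^2,\SOg(3))$, $(D^2,\SOg^+(2,1))$, $(\RR^2,\Eg_0(2))$. This is the only branching; everything else is bookkeeping.

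Next I would treat the three cases in turn, but the work is already done. For $(N,G')=(S^2,\SOg(3))$, Proposition~\ref{PGeometryS3} gives $(M,G)\cong(S^3,\Ug(2))$, which is item (a). For $(N,G')=(D^2,\SOg^+(2,1))$, Proposition~\ref{PGeometrySL} gives $(M,G)\cong(\widetilde\SLg,\Gamma)$, which is item (b). For $(N,G')=(\RR^2,\Eg_0(2))$, Proposition~\ref{PGeometryNil} gives the $\Nil$--geometry $(\Nil,\Nil\rtimes\SOg(2))$, which is item (c). So the proof is essentially the sentence ``combine Propositions~\ref{PHisSubGroupOfG}, \ref{PGeometryS3}, \ref{PGeometrySL}, \ref{PGeometryNil}.''

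The one point that still needs a word is the word \emph{exactly}: the three geometries in the list are pairwise non-isomorphic. This is immediate from topology, since the underlying manifolds $S^3$, $\widetilde\SLg\cong\RR^3$, and $\Nil\cong\RR^3$ already separate (a) from (b),(c) — $S^3$ is compact — while $(b)$ and $(c)$ are distinguished by $G$: in case (c) the group $\Nil\rtimes\SOg(2)$ is solvable (even two-step nilpotent-by-compact), whereas in case (b) the group $\Gamma$ has $\SOg^+(2,1)$, hence a copy of $\fsl(2,\RR)$, as a quotient and is therefore not solvable; alternatively the curvature form $\de\omega$ together with $\kappa=\div X = 0$ (Corollary~\ref{CXKillingField}) shows these geometries are genuinely of the three advertised model types, matching the classical list in \cite{Scott1983}. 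I do not expect any real obstacle here: every ingredient has been established, and the only thing to be careful about is making the exhaustiveness of the case split explicit and checking the three resulting geometries are distinct, both of which are short.

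\begin{proof}
By Proposition~\ref{PHisSubGroupOfG} the quotient $G'=G/H$ acts on $N$ transitively, effectively and with $1$--dimensional connected isotropy group isomorphic to $\SOg(2)$. Hence $N$ is a simply connected homogeneous space admitting a $G'$--invariant Riemannian metric, which necessarily has constant Gaussian curvature; therefore $(N,G')$ is equivariant diffeomorphic to exactly one of
\[
 (S^2,\SOg(3)),\quad (D^2,\SOg^+(2,1)),\quad (\RR^2,\Eg_0(2)).
\]
In the first case Proposition~\ref{PGeometryS3} yields $(M,G)\cong(S^3,\Ug(2))$, in the second case Proposition~\ref{PGeometrySL} yields $(M,G)\cong(\widetilde\SLg,\Gamma)$, and in the third case Proposition~\ref{PGeometryNil} yields $(M,G)\cong(\Nil,\Nil\rtimes\SOg(2))$. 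This gives the three items (a), (b), (c).

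It remains to see that these three geometries are pairwise non--isomorphic. The manifold $S^3$ is compact while $\widetilde\SLg$ and $\Nil$ are both diffeomorphic to $\RR^3$, so (a) is isomorphic to neither (b) nor (c). Finally (b) and (c) are distinguished by their groups: $\Nil\rtimes\SOg(2)$ is solvable, since $\Nil$ is nilpotent and $\SOg(2)$ abelian, whereas $\Gamma$ surjects onto $\SOg^+(2,1)$ and is therefore not solvable. Hence $(M,G)$ is equivariant diffeomorphic to exactly one geometry in the list.
\end{proof}
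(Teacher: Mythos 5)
Your proposal is correct and follows the same route as the paper: the theorem is stated there as a summary whose proof is exactly the combination of the trichotomy for $(N,G')$ coming from Proposition~\ref{PHisSubGroupOfG} with Propositions~\ref{PGeometryS3}, \ref{PGeometrySL} and \ref{PGeometryNil}. Your additional check that the three geometries are pairwise non-isomorphic (compactness of $S^3$, solvability of $\Nil\rtimes\SOg(2)$ versus the non-solvable quotient $\SOg^+(2,1)$ of $\Gamma$) is a small but welcome addition that the paper leaves implicit.
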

\noindent
For a detailed discussion of these geometries see \cite{Scott1983}.

\section{Lie groups as geometries}\label{SLieGroups}

If the stabilizer of $G$ is trivial we may identify $G$ with $M$ through the group action. Since we assumed that $M$ is simply connected we may linearize the problem and classify the $3$-dimensional Lie algebras.

This was done first by Luigi Bianchi in the \nth{19} century, see \cite{Bianchi1898} or \cite{Bianchi2001}. This approach was translated in \cite{Glas2013} to a more modern, coordinate free language. Here the Lie algebras are divided into two classes: in the unimodular ones and the non-unimodular. Geometrically speaking this means that if $G$ admits a compact quotient, then $G$ has to be unimodular, compare \cite[p. 99]{Milnor1976}. Thus the non--unimodular Lie groups cannot posses cocompact subgroups and therefore they do not appear in Thurston's list.

Another way to classify the $3$-dimensional Lie algebras is to consider the derived Lie algebra $\fg'=[\fg,\fg]$ which isomorphism class is an invariant for the isomorphism class of $\fg$. In particular the dimension of $\fg'$ does not change under Lie algebra isomorphisms. This approach may be found in full detail \cite[p. 67]{Konstantis2013} OR in \cite{Konstantis}. We would like to describe briefly this approach and the resulting $3$-dimensional Lie algebras.

\bigskip
\noindent
{$\bullet$ ${\dim \fg=0}$}\;: Then $\fg$ is abelian and $\fg =\RR^3$.

\bigskip
\noindent
{$\bullet$ ${\dim \fg=1}$}\;: We obtain the short exact sequence of Lie algebras
\[
 0 \longrightarrow \RR \longrightarrow \fg \longrightarrow \RR^2 \longrightarrow 0.
\]
The resulting Lie algebras are $\fh^2 \times \RR$ where $\fh^2$ is the non--abelian $2$-dimensional Lie algebra and the Lie algebra of the Heisenberg group.

\bigskip
\noindent
{$\bullet$ ${\dim \fg=2}$}\;: We obtain the short exact sequence of Lie algebras
\[
 0 \longrightarrow \RR^2 \longrightarrow \fg \longrightarrow \RR \longrightarrow 0.
\]
Obviously this sequence always splits. Suppose $e_3$ is a non-zero vector in $\RR$. A splitting map determines the Lie brackets of $e_3$ with an element of $\fg'=\RR^2$. Since all other Lie brackets are trivial, the linear map $\ad_{e_3} \colon \fg' \to \fg'$ determines the Lie algebra. The remaining task is to determine what linear maps $\ad_{e_3}$ produce isomorphic Lie algebras, which is an easy exercise (compare \cite[p. 69]{Konstantis2013}). There are $3$ types of Lie algebras and two of them come with a continuous family of non-isomorphic Lie algebras.

\section{The complete list of geometries}

In this last section we would like to write down the final list of $3$-dimensional geometries.

\begin{thm}\label{TBigTheorem}
Let $(M,G)$ be a geometry. Then $(M,G)$ is equivariant diffeomorphic to exactly one of the following geometries:
\begin{itemize}
 \item[(i)] {isotropic geometries} ($\dim K=3$).
 \begin{center}
 $(\RR^3, \Eg_0(3))$\;
 $(S^3, \SOg(4)$\;
 $(D^3, \Hg_0(3))$\;
 \end{center}
\item[(ii)] {axially symmetric geometries} ($\dim K=1$).
\begin{center}
 \begin{tabular}[ht]{c|c}
  flat & non--flat \\
  \hline\hline
  $(S^2\times \RR, \SOg(3)\times \RR)$ & $(S^3, \Ug(2))$\\
  $(D^2\times \RR, \SOg^+(2,1)\times \RR)$ &$(\widetilde{\SLg}, \Gamma)$\\
  $(\RR^2\times \RR, \Eg_0(2)\rtimes_{\rho_i} \RR)$, $i=0,1$ & $(\Nil,\Nil\rtimes \SOg(2))$\\
\end{tabular}
\end{center}

\item[(iii)] Lie groups in dimension $3$ ($\dim K=0$), compare the list in \cite[p. 73]{Konstantis2013}

\end{itemize}
\end{thm}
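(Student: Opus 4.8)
The plan is to prove the theorem by a case distinction on $\dim K$, which by the discussion in the introduction can only be $0$, $1$ or $3$ (the stabilizer $K$ is compact and connected, its faithful isotropy representation embeds it into $\SOg(3)$, and $\fso(3)$ has no $2$-dimensional subalgebra), and then, at the end, to check that the resulting list has no repetitions.

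First I would dispose of the case $\dim K = 3$. Then $K$ is a connected $3$-dimensional subgroup of $\SOg(3)$, hence $K = \SOg(3)$ and the isotropy representation is the standard one; consequently a $G$-invariant metric $\mu$ is $\SOg(3)$-invariant on each tangent space, hence unique up to a positive scalar, and $(M,\mu)$ is a complete (by homogeneity) simply connected space form, so isometric to $\RR^3$, $S^3$ or $D^3$ with a standard metric. Since $\dim {\rm Isom}(M,\mu) = 6 = \dim M + \dim K = \dim G$ and $G$ is connected, $G$ is an open, hence the identity, component of ${\rm Isom}(M,\mu)$, namely $\Eg_0(3)$, $\SOg(4)$ or $\Hg_0(3)$ respectively (compare Theorem 3.8.4(a) in \cite{Thurston1997}); this is list (i). The case $\dim K = 1$ is exactly what Section 2 establishes: one gets the $G$-invariant unit vector field $X$, the $H$-principal bundle $\pi\colon M \to N$ with $H \in \{\RR,\SOg(2)\}$ over the simply connected surface $N = M/H$, and the $G$-invariant connection $\omega$ (Corollary \ref{CGinvariantConnection}); since $\de\omega$ is $G$-invariant it is either everywhere or nowhere zero, and Theorem \ref{TClassificationFlatGeometries} (flat case) together with Theorem \ref{TNonFlatGeometries} (non-flat case) produce exactly the seven geometries of (ii). The case $\dim K = 0$ means the action is free, so $g\mapsto g.m_0$ is a $G$-equivariant diffeomorphism of $G$ (with the left-translation action) onto $M$; since $M$ is simply connected, $G$ is the simply connected Lie group with a $3$-dimensional real Lie algebra $\fg$, and conversely every such group is a geometry with trivial, hence compact, stabilizer. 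Thus this case reduces to the classification of $3$-dimensional real Lie algebras carried out in Section \ref{SLieGroups} (Bianchi; see \cite[p.~73]{Konstantis2013}), giving (iii).

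Finally I would justify the word ``exactly one''. Across the three blocks the integer $\dim K$ is an invariant of the equivariant-diffeomorphism class, so there are no collisions between (i), (ii) and (iii). Inside (ii), flatness of the canonical $G$-invariant connection is intrinsic to $(M,G)$, separating the two columns; within the flat column the four groups $\SOg(3)\times\RR$, $\SOg^+(2,1)\times\RR$, $\Eg_0(2)\times\RR$ and $\Eg_0(2)\rtimes_{\rho_1}\RR$ are pairwise non-isomorphic (a simple factor $\fso(3)$ or $\fsl(2,\RR)$ distinguishes the first two from each other and from the last two, and the center distinguishes the last two, cf.\ Example \ref{EFlatGeometry}); within the non-flat column $\Ug(2)$ acts on a compact $M$ while $\widetilde{\SLg}$ and $\Nil$ do not, and $\Gamma$ and $\Nil\rtimes\SOg(2)$ have Lie algebras $\RR\oplus\fsl(2,\RR)$ (non-solvable) and $\mathfrak{nil}\rtimes\RR$ (solvable). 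Inside (iii) the entries are pairwise non-isomorphic Lie algebras by construction. Since an isomorphism of geometries induces a Lie group isomorphism $G \to G'$, in every case the listed geometries are pairwise non-isomorphic.

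The hard part is not really in this theorem: essentially all the content sits in Section 2 and in the Lie-algebra classification of Section \ref{SLieGroups}, which I am allowed to invoke. The two places that still demand attention are the $\dim K = 3$ case — where one must be slightly careful that the dimension count $6 = \dim {\rm Isom}(M,\mu) = \dim G$ really forces $G$ to be the full identity component of the isometry group — and the exhaustive bookkeeping of the pairwise-non-isomorphism claim, where for every pair of list entries one must name an invariant (underlying manifold, flatness of the canonical connection, solvability or a simple factor of the Lie algebra, the center) that tells them apart. Neither step is deep; the only real risk is of overlooking a case.
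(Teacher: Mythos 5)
Your proposal is correct and follows essentially the same route as the paper, which states Theorem \ref{TBigTheorem} without a separate proof as a summary: the trichotomy $\dim K\in\{0,1,3\}$, the space-form argument for $\dim K=3$ from the introduction, Theorems \ref{TClassificationFlatGeometries} and \ref{TNonFlatGeometries} for $\dim K=1$, and the Lie-algebra classification of Section \ref{SLieGroups} for $\dim K=0$. Your added care on the dimension count $\dim G=6=\dim{\rm Isom}(M,\mu)$ and on the pairwise non-isomorphism bookkeeping (via $\dim K$, flatness of $\omega$, and Lie group invariants) only makes explicit what the paper leaves implicit.
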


\bibliography{NCTG}{}

\begin{thebibliography}{GKKL13}

\bibitem[Bia98]{Bianchi1898}
{\sc Bianchi, Luigi}: {\em Sugli spazii a tre dimensioni che ammettono un
  gruppo continuo di movimenti}.
\newblock Soc. Ital. Sci. Mem. di Mat., 11:267--352, 1898.

\bibitem[Bia01]{Bianchi2001}
{\sc Bianchi, Luigi}: {\em On the Three-Dimensional Spaces Which Admit a
  Continuous Group of Motions}.
\newblock General Relativity and Gravitation, 33:2171--2253, 2001.

\bibitem[GKKL13]{Glas2013}
{\sc Glas, Manuel}, {\sc Panagiotis Konstantis}, {\sc Achim Krause}
  \btxandlong{}\ {\sc Frank Loose}: {\em Bianchi's classification of
  3-dimensional Lie algebras revisited}.
\newblock to appear, 2013.

\bibitem[KKL]{Konstantis}
{\sc Konstantis, Panagiotis}, {\sc Achim Krause} \btxandlong{}\ {\sc Frank
  Loose}: {\em Classification of solvable Lie algebras in dimensions up to 5}.
\newblock To appear.

\bibitem[Kne29]{Kneser1929}
{\sc Kneser, Hellmuth}: {\em Geschlossene Fl\"achen in dreidimensionalen
  Mannigfaltigkeiten}.
\newblock Jahresbericht der Deutschen Mathematiker Vereinigung, 38:248--260,
  1929.

\bibitem[Kob95]{Kobayashi1995}
{\sc Kobayashi, Shoshichi}: {\em {T}ransformation {G}roups in {D}ifferential
  {G}eometry}.
\newblock Springer, Berlin Heidelberg, 1995.

\bibitem[Kon13]{Konstantis2013}
{\sc Konstantis, Panagiotis}: {\em {T}hree-{D}imensional {H}omogeneous {S}paces
  and their {A}pplication in {G}eneral {R}elativity}.
\newblock Doctoral thesis, Eberhard Karls Universit\"at T\"ubingen,
  http://www.math.uni-tuebingen.de/user/loose/studium/Disseratationen/Diss.Konstantis.pdf,
  2013.

\bibitem[Lee03]{Lee2003}
{\sc Lee, John~M.}: {\em {I}ntroduction to {S}mooth {M}anifolds}.
\newblock Springer, New York, 2003.

\bibitem[Mil62]{Milnor1962}
{\sc Milnor, John}: {\em A unique decomposition theorem for 3-manifolds}.
\newblock American Journal of Mathematics, 84:1–7, 1962.

\bibitem[Mil76]{Milnor1976}
{\sc Milnor, John}: {\em {C}urvatures of left invariant metrics on {L}ie
  groups}.
\newblock Advances in Mathematics, 21(3):293--329, 1976.

\bibitem[Sco83]{Scott1983}
{\sc Scott, Peter}: {\em {G}eometries of 3--{M}anifolds}.
\newblock Bull. London Math. Soc., 15:401--487, 1983.

\bibitem[Thu82]{Thurston1982}
{\sc Thurston, William}: {\em Three-dimensional manifolds, Kleinian groups and
  hyperbolic geometry}.
\newblock American Mathematical Society. Bulletin. New Series, 6:357–381,
  1982.

\bibitem[Thu97]{Thurston1997}
{\sc Thurston, W.~P.}: {\em {G}eometry and {T}opology of {T}hree--{D}imensional
  {M}anifolds}.
\newblock Princeton University Press, New Jersey, 1997.

\bibitem[Wei97]{Weibel1997}
{\sc Weibel, Charles~A.}: {\em {A}n {I}ntroduction to {H}omogical {A}lgebra}.
\newblock Cambridge University Press, Cambridge, 1997.

\end{thebibliography}
\bibliographystyle{geralpha}
\end{document}